\documentclass[10pt]{amsart}
\usepackage{amssymb}
\usepackage{latexsym}
\usepackage{amsthm}
\usepackage{amscd}
\usepackage{color}

\newtheorem{thm}{Theorem}[section]
\newtheorem{cor}[thm]{Corollary}
\newtheorem{lem}[thm]{Lemma}
\newtheorem{prop}[thm]{Proposition}

\newtheorem{defn}[thm]{Definition}

\numberwithin{equation}{section}

\newcommand{\cj}{\overline}

\newcommand{\al}{\alpha}
\newcommand{\Ga}{\Gamma}
\newcommand{\ep}{\varepsilon}

\allowdisplaybreaks[1]

\newcommand{\qq}{{q_{\circ}}}
\newcommand{\rr}{{r_{\circ}}}

\begin{document}
\title[mass critical fractional  Schr\"odinger
equation]{Profile decompositions and Blowup phenomena  of \\ mass critical fractional
Schr\"odinger equations}
\author[Y. Cho]{Yonggeun Cho}
\address{Department of Mathematics, and Institute of Pure and Applied Mathematics, Chonbuk National University, Jeonju 561-756, Republic of Korea}
\email{changocho@jbnu.ac.kr}
\author[G. Hwang]{Gyeongha Hwang}
\address{Department of Mathematics, Pohang University of Science and Technology, Pohang 790-784, Republic of Korea}
\email{g\_hwang@postech.ac.kr}
\author[S. Kwon]{Soonsik Kwon}
\address{Department of Mathematical Sciences, Korea Advanced Institute of Science and Technology, Daejeon 305-701, Republic of Korea} \email{soonsikk@kaist.edu}
\author[S. Lee]{Sanghyuk Lee}
\address{Department of Mathematical Sciences, Seoul National University, Seoul 151-747, Republic of Korea}
\email{shklee@snu.ac.kr}

\subjclass[2010]{35Q55, 35Q40}
\keywords{fractional Schr\"odinger equation, mass critical nonlinearity, profile decomposition, blowup phenomena}
\thanks{}

\begin{abstract}
We study, under the radial symmetry assumption, the solutions to the
fractional Schr\"odinger equations of  critical nonlinearity in
$\mathbb R^{1+d}, d \geq 2$, with L\'{e}vy index ${2d}/({2d-1}) < \al < 2$. We
firstly prove the linear profile decomposition 
and then apply it to investigate the properties of the blowup
solutions of the nonlinear equations with mass-critical Hartree type nonlineartity.
\end{abstract}

\maketitle
\section{Introduction}

In \cite{la1} Laskin introduced the fractional quantum mechanics in
which he generalized the Brownian-like quantum mechanical path, in
the Feynman path integral approach to quantum mechanics, to the
$\alpha$-stable L\'evy-like quantum mechanical path.
This gives a rise to the fractional generalization of the
Schr\"odinger equation.  Namely, the associated equation for the
wave function results in the fractional Schr\"odingier equations,
which contains a nonlocal fractional derivative operator
$(-\Delta)^\frac\alpha2$ defined by $(-\Delta)^\frac\alpha2=\mathcal
F^{-1}|\xi|^\alpha \mathcal F$. In this paper we consider the
following Cauchy problem with mass critical Hartree type
nonlinearity:
\begin{align}\label{eqn}
\left\{\begin{array}{l} iu_t + (-\Delta)^\frac\alpha2 u = \lambda (|x|^{-\al} * |u|^2)u, \;\;(t, x) \in \mathbb{R}^{1+d},\,\, d \ge 2,\\
u(0, x) = f(x) \in L^2,
\end{array}\right.
\end{align}
where $\lambda=\pm1$. Here $\alpha$ is L\'{e}vy stability index with
$1 < \alpha \le 2$. When $\alpha = 2$, the fractional Schr\"odinger
equation becomes the well-known Schr\"odinger equation. See \cite{la2,
la3} and references therein for further discussions related to the
factional quantum mechanics.

The solutions to equation \eqref{eqn} have the conservation laws for
the mass and the energy:
\begin{align*}
M(u) &= \int |u|^2 \,dx,\,\, E(u) = \frac 12 \int
\cj{u}|\nabla|^\alpha u \,dx - \frac\lambda4 \int \cj{u}(|x|^{-\al} * |u|^2)u\,dx.
\end{align*}
We say  that \eqref{eqn} is focusing if $\lambda =1$, and defocusing
if $\lambda=-1$. The equation \eqref{eqn} is mass-critical, as
$M(u)$ is invariant under scaling symmetry $  u_\rho (t,x) =
\rho^{-d/2}u({t}/{\rho^{\alpha}},$ ${x}/\rho )$, $\rho>0$ which is
again a solution to \eqref{eqn} with initial datum $\rho^{-d/2}u(0,
{x}/\rho)$.  The equation \eqref{eqn} is locally well-posed in $L^2$
for radial initial data and globally well-posed for sufficiently
small radial data \cite{chho}. (See \cite{guwa} for results
regarding power type nonlinearities.) For the focusing case,  the
authors \cite{chkl} used a virial argument to show the finite time
blowup, with radial data,
provided that the energy $E(u)$ is negative. Also see \cite{guhu}
and \cite{guhaxi} for results with noncritical nonlinearity.

In this paper we aim to investigate the blowup phenomena of
\eqref{eqn} with radial data when $\alpha<2$. Due to the critical
nonlinearity the time of existence no longer depends on the $L^2$
norm of initial data. Instead it relies on the profiles of the data.
Hence the situation become more subtle.   When
$\alpha=2$, a lot of work was devoted to the study of blowup
phenomena, which was based on the usual Strichartz and its
refinements. (See for instance \cite{keme,ker2,meve}.)
When it comes to the fractional the Schr\"odinger equation
$(1<\alpha<2)$, due to the non-locality of fractional operator,
various useful properties (e.g. Galilean invariance) which hold for the Schr\"odinger equation
are no longer available. The main difficulty comes from absence of
proper linear estimates. In fact, by scaling the condition
$\alpha/q+d/r=d/2$ should be satisfied by the pair $(q,r)$ if
$L^2$-$L_t^qL_x^r$ estimates were true for the linear propagator
$f\to e^{it(-\Delta)^{\frac\alpha2}}f$. But such estimate is
impossible as the Knapp example shows that $\dot{H}^s$-$L_t^qL_x^r$ is
only possible for $2/q + d/r \le d/2$. In order to get around these
difficulties we work with radial assumption on the initial data,
which allows us to use the recent results on the Strichartz
estimates for radial functions \cite{guwa} or angularly regular
functions \cite{cholee}.

\subsubsection*{\textbf{Linear profile decomposition}}
As for linear estimates such as the Strichartz estimates or Sobolev
inequalities, the presence of noncompact symmetries causes defect of
compactness. The profile decomposition with respect to the
associated linear estimates is a measure to make it rigorous that
such symmetries are the only source of non-compactness.

Concerning nonlinear  dispersive equations (especially nonlinear
wave and Schr\"o-dinger equations), the profile decompositions have
been intensively studied and led to various recent developments in
the study of  equations with the critical nonlinearity
(\cite{keme}). Profile decompositions for the Schr\"odinger
equations with $L^2$ data were obtained by Merle and Vega
\cite{meve} when $d=2$, Carles and Keraani \cite{ck}, $d=1$, and
B\'egout and Vargas [3], $d\ge 3$. ( Also see \cite{bage, bul,
ram} for results on the wave equation and \cite{sh, ksv, favi} on
general dispersive equations.) These results are based on
refinements of Schrichartz estimates (see \cite{mvv, bo1}). There is
a different approach based on Sobolev imbedding but such approach is
not applicable especially the equation is $L^2$-critical. Our
approach is also based on  a refinements of Schrichartz estimate.
Thanks  to the extended range of admissible due to the radial
assumption it is relatively simpler to obtain the refinement (see
Proposition \ref{ref-str-prop} which is used for the proof of
profile decomposition.)

 We now define the linear propagator $U
(t)f$ to be the solution to the linear equation $iu_t +
(-\Delta)^\frac\alpha2 u = 0$ with initial datum $f$. Then it is
formally given  by
\begin{align}\label{int eqn}
U (t)f = e^{it(-\Delta)^{\frac\alpha2}}f=\frac1{(2\pi)^d}
\int_{\mathbb{R}^d} e^{i(x\cdot \xi + t
|\xi|^\alpha)}\widehat{f}(\xi)\,d\xi.
\end{align}
Here $\widehat{f}$ denotes the Fourier transform of $f$ such that
$\widehat{f}(\xi) = \int_{\mathbb{R}^d} e^{-ix\cdot \xi} f(x)\,dx$.

The following is our first result:

\begin{thm}\label{main}
Let  $d \ge 2$, $\frac{2d}{2d-1} < \alpha < 2$, and $2 < q, r <
\infty$ satisfy $\frac \al q + \frac dr = \frac d2$.  Suppose that
$(u_n)_{n \geq 1}$ is a sequence of complex-valued radial functions
satisfying $\|u_n\|_{L^2_x} \leq 1$. Then up to a subsequence, for
any $l \geq 1$, there exist a sequence of radial functions
$(\phi^j)_{1\leq j \leq l}\in L^2$, $\omega_n^l \in L^2$ and a
family of parameters $(h_n^j, t_n^j)_{1 \leq j \leq l, n \geq 1}$
such that
$$u_n(x) = \sum_{1 \leq j \leq l} U(t^j_n)[(h^j_n)^{-d/2}\phi^j(\cdot/{h^j_n})](x) + \omega^l_n(x)$$
and the following properties are satisfied:
$$\lim_{l \rightarrow \infty} \limsup_{n \rightarrow \infty}
\|U_\alpha(\cdot)
\omega^l_n\|_{L^q_tL^r_x (\mathbb{R} \times
\mathbb{R}^d)} = 0,$$ and for $j \neq k$, $(h^j_n, t^j_n)_{n \geq
1}$ and $(h^k_n, t^k_n)_{n \geq 1}$ are asymptotically orthogonal in
the sense that
\begin{align*}
&\mbox{either}\;\; \limsup_{n \rightarrow
\infty}\left(\frac{h^j_n}{h^k_n} +
\frac{h^k_n}{h^j_n}\right) = \infty,\\
&\mbox{or}\;\; (h^j_n) = (h^k_n)\;\;\mbox{and}\;\;\limsup_{n
\rightarrow \infty}\frac{|t^j_n - t^k_n|}{(h^j_n)^\alpha} = \infty,
\end{align*}
and for each $l$
$$\lim_{n \rightarrow \infty} \Big[\|u_n\|_{L^2_x}^2 - (\sum_{1 \leq j \leq l}\|\phi^j\|_{L^2_x}^2
+ \|\omega^l_n\|_{L^2}^2) \Big] = 0.$$
\end{thm}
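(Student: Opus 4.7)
The proof proceeds by the iterative bubble extraction strategy pioneered by Bahouri--G\'erard and adapted to the $L^2$ setting by Merle--Vega, Carles--Keraani, and B\'egout--Vargas. Because we work with radial data and the propagator $U(t)=e^{it(-\Delta)^{\alpha/2}}$ admits no Galilean invariance for $\alpha<2$, the only noncompact symmetries left are scaling $f\mapsto h^{-d/2}f(\cdot/h)$ and time translation, which is why the two parameters $(h_n^j,t_n^j)$ in the statement suffice (no spatial translation appears because of radiality, and no frequency translation because of the missing Galilean symmetry). The key ingredient is the refined Strichartz estimate of Proposition \ref{ref-str-prop}; once it is granted, the argument is largely bookkeeping.

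\emph{Extraction of the first bubble.} If $\varepsilon_0:=\limsup_n\|U(\cdot)u_n\|_{L^q_tL^r_x}=0$ the theorem is trivial with $\omega_n^0=u_n$; otherwise I expect Proposition \ref{ref-str-prop} to control $\|U(\cdot)f\|_{L^q_tL^r_x}$ by $\|f\|_{L^2}^{1-\theta}$ times the supremum over dyadic frequency-localized pieces in some weaker norm $Y$. This supplies, along a subsequence, a dyadic frequency $N_n=1/h_n^1$ with $\|U(\cdot)P_{N_n}u_n\|_Y\gtrsim \varepsilon_0^{1/\theta}$. Rescaling $v_n(x)=(h_n^1)^{d/2}u_n(h_n^1 x)$ moves the concentration to unit frequency. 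A weak-compactness argument applied to $U(-s_n)P_1 v_n$ for a suitably chosen time $s_n$ extracts a nonzero radial weak limit $\phi^1\in L^2$. Setting $t_n^1=(h_n^1)^{\alpha}s_n$ and $\omega_n^1=u_n-U(t_n^1)[(h_n^1)^{-d/2}\phi^1(\cdot/h_n^1)]$, the $L^2$-weak convergence combined with unitarity of $U(t)$ on $L^2$ yields the Pythagorean identity $\|u_n\|_2^2=\|\phi^1\|_2^2+\|\omega_n^1\|_2^2+o(1)$. Using the refinement once more produces a quantitative decrement of the form $\limsup_n\|U(\cdot)\omega_n^1\|_{L^q_tL^r_x}^{\sigma}\le\limsup_n\|U(\cdot)u_n\|_{L^q_tL^r_x}^{\sigma}-c\|\phi^1\|_2^{\sigma'}$ for some universal $c,\sigma,\sigma'>0$.

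\emph{Iteration, orthogonality, closing.} Applying the same procedure to $\omega_n^1$, then to $\omega_n^2$, and so on produces the asserted decomposition after a diagonal subsequence extraction. Mass orthogonality propagates inductively, giving $\sum_{j\le l}\|\phi^j\|_2^2\le 1$ for every $l$, hence $\|\phi^j\|_2\to 0$; combined with the quantitative decrement this forces $\limsup_n\|U(\cdot)\omega_n^l\|_{L^q_tL^r_x}\to 0$ as $l\to\infty$, which is the stated remainder estimate. For asymptotic orthogonality of parameters I argue by contradiction: if for some $j<k$ neither alternative holds, then along a subsequence $h_n^j/h_n^k\to\beta\in(0,\infty)$ and $(t_n^j-t_n^k)/(h_n^j)^{\alpha}\to\tau\in\mathbb{R}$. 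Pulling the construction of $\phi^k$ back through the rescaling and time translation attached to the $j$-th bubble, and using weak $L^2$-continuity of the unitary operator $U(-\tau)$ together with the dilation by $1/\beta$, one identifies $\phi^k$ as a weak limit of a sequence already known by construction of $\omega_n^{k-1}$ to tend weakly to zero, contradicting $\phi^k\ne 0$.

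\textbf{Main obstacle.} The technical heart of the proof lies in Proposition \ref{ref-str-prop}, which leans on the enhanced radial Strichartz estimates of \cite{guwa,cholee} to bypass the Knapp obstruction mentioned in the introduction. Granting this refinement, the most delicate step inside Theorem \ref{main} is the asymptotic orthogonality of parameters: because the dispersion relation $|\xi|^{\alpha}$ is homogeneous of order $\alpha$ rather than $2$, the scale and time rescalings must be coupled via the factor $(h_n^j)^{\alpha}$, and the two alternatives in the statement are precisely what rules out weak compactness under this coupled rescaling. Everything else is careful bookkeeping and repeated diagonal extraction.
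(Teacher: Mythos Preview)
Your outline follows the one-stage Bahouri--G\'erard/Keraani template (extract one bubble at a time, each carrying both a scale and a time), whereas the paper proceeds in two separate stages. Proposition~\ref{prop-decom1} first uses the refined estimate to split $u_n$, for each $\delta>0$, into finitely many pieces $f_n^j$ sitting at mutually orthogonal dyadic scales $\rho_n^j$, with the crucial extra property that each rescaled $\widehat{f_n^j}$ is \emph{uniformly $L^\infty$-bounded on a fixed annulus}; this is manufactured by a height truncation $\{|\widehat{u_n}|<\lambda\}$ on the Fourier side. Only then does Proposition~\ref{further-decomp} extract, for each fixed $j$, time translates $s_n^\ell$ by taking near-maximal weak $L^2$ limits of $U(-s_n)F_n$. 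The two layers of error terms are finally merged through the relabeling and the product-orthogonality Lemma~\ref{ortho}. The two-stage route buys exactly the Fourier-side $L^\infty$ control that makes the time-extraction quantitative.

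Your sketch has a genuine gap at the decrement step. The claimed inequality
\[
\limsup_n\|U(\cdot)\omega_n^1\|_{L^q_tL^r_x}^{\sigma}\le\limsup_n\|U(\cdot)u_n\|_{L^q_tL^r_x}^{\sigma}-c\|\phi^1\|_2^{\sigma'}
\]
does not follow from Proposition~\ref{ref-str-prop} and is in general false: subtracting a single weak-limit bubble does not decouple the Strichartz norm. The correct mechanism is the one encoded in \eqref{bound-mu}: one chooses $\phi^{l+1}$ with $\|\phi^{l+1}\|_2\ge\tfrac12\mu(\mathcal F^l)$, where $\mu$ is the supremum of $L^2$ norms over all weak limits, and proves $\limsup_n\|U(\cdot)\omega_n^l\|_{L^q_tL^r_x}\lesssim\mu(\mathcal F^l)^\theta$; since $\mu(\mathcal F^l)\le 2\|\phi^{l+1}\|_2\to 0$ by the mass Pythagoras, the remainder vanishes. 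Proving this $\mu^\theta$ bound is precisely where the paper's truncation and radiality enter: one interpolates the Strichartz norm against $L^\infty_{t,x}$, locates a peak $(s_n,y_n)$, uses the $L^\infty$ bound on $\widehat{F_n}$ together with radiality to confine $|y_n|$, and then pairs against a fixed test function to recognize a weak limit. Your single-stage sketch has no analogue of the truncation, so the assertion ``a suitably chosen $s_n$ extracts a nonzero weak limit $\phi^1$ with $\|\phi^1\|_2\gtrsim\varepsilon_0^{1/\theta}$'' is unsupported: knowing only that $2^{kd(1/2-1/p)}\|\widehat{P_k u_n}\|_p$ is large does not, by itself, rule out that every weak limit of $U(-s_n)P_0v_n$ is zero.
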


In what follows we make use of the linear profile decomposition to
get nonlinear profile decompositions of the solutions to
\eqref{eqn}.

\subsubsection*{\textbf{Nonlinear profile decomposition}}
Let us set
\[(q_\circ,r_\circ)=\Big(3,\frac
{6d}{3d - 2\al}\Big).\] As it can be shown by the usual fixed point
argument and the Strichartz estimate (with $\alpha$-admissible
pairs), in Lemma \ref{str-radial} the local well-posedness
theory can be based on the estimate of space-time norm  $
\|u\|_{L_t^\qq L_x^\rr (I \times\mathbb{R}^d)}$ \footnote{In fact,
one use choose any $\alpha$-admissible $(q,r)$ such that
$6d/(3d-\alpha)\le r\le 6d/(3d-2\alpha)$ if  $2d/(2d-1)<\alpha<2$
and $6d/(3d-\alpha)< r\le 6d/(3d-2\alpha)$ if  $2d/(2d-1)=\alpha$.
For instance see \cite{chl}.}. As a by product, if the solution
fails to persist, then the space-time norm blows up.


\begin{defn}
A solution $u \in C_tL^2_x((-T_{min},T_{max})\times\mathbb{R}^d)$ to
\eqref{eqn} is said to blow up if $\|u\|_{L_t^\qq
L_x^\rr((-T_{min},T_{max})\times\mathbb{R}^d)} = \infty$. Here
$-T_{min}, T_{max}\in [-\infty,\infty]$ denote the maximal times
of existence of the solution.
\end{defn}
Since $ T_{max}$ or $T_{min}$ may be $\infty$, we regard
non-scattering global solutions as blowup solutions at
infinite time.
 We also define the minimal mass of solutions from which a solution may ignite to blow up.
\begin{defn}\label{minimal mass}
$\delta_0 := \sup\, \{A \ge 0 : $ If $ \|u_0\|_{L^2_x} < A$, for all
$u_0 \in L^2_x$ \eqref{eqn} is globally well-posed forward and
backward,  and its solution $u$ satisfies $\|u\|_{L_t^\qq
L_x^\rr((-\infty,\infty)\times\mathbb{R}^d)}$
$< \infty\}$.
\end{defn}
By the small data global existence, we have $\delta_0 >0$. Moreover,
for any $ \delta >\delta_0 $, there exists a blowup solution $u$
with $\delta_0 \le \|u\|_{L^2} \le \delta $. In
Theorem~\ref{th:minimal blow up} below we show that there exists a blowup
solution having the minimal mass $\delta_0$. This will be shown by
using the nonlinear profile decomposition, which is derived from the
linear profile decomposition combined with perturbation theory.

For a given sequence of radial data $ \{u^0_n\} \subset L^2_x $,
from the linear profile decomposition (Theorem~\ref{main}), we have
an asymptotically orthogonal decomposition to a sequence
$\{\phi^j\}_{1\leq j \leq l} \in L^2$, $\omega_n^l \in L^2$,
$(h_n^j, t_n^j)_{1 \leq j \leq l, n \geq 1}$. Then by taking
subsequence, if necessary, we may assume that $ t_n \in \{-\infty,
0, \infty\}$. Here we denote $t_n = \lim_j t^j_n $. Using the
local well-posedness theorem with initial data at $t=0$ or $t=\pm
\infty $ (see Lemma~\ref{wpasy} below), we define the nonlinear
profile by the maximal nonlinear solution for each linear profile.


\begin{defn}
Let $\{h_n,t_n\} $ be a family of parameters and $\{t_n\}$
have a limit in $[-\infty, \infty]$. Given a linear profile $\phi
\in L^2_x$ with $\{(h_n,t_n)\}$, we define the nonlinear profile
associated with it to be  the maximal solution $\psi$ to \eqref{eqn}
which is in $C_tL^2_x((-T_{\min},T_{\max}) \times \mathbb{R}^d)$
satisfying an asymptotic condition: For the sequence $\{t_n\}$,
$$\lim_{n \to \infty}\|U(t_n)\phi - \psi(t_n)\|_{L^2_x} = 0.$$
\end{defn}

\noindent Then, the linear profile decomposition yields the
nonlinear profile decomposition which  is the key tool for proving
blowup phenomena in what follows.
\begin{prop}\label{nlpf}
Let $\{u_n^0\} \subset L^2_x$ be a bounded sequence. Suppose that
$\{\phi^j\}_{1\leq j \leq l} \in L^2$, $\omega_n^l \in L^2$,
$(h_n^j, t_n^j)_{1 \leq j \leq l, n \geq 1}$ is a linear profile
decomposition obtained from Theorem~\ref{main}. Let $u_n \in
C(J_n;L^2_x)$ be the maximal solution of \eqref{eqn} with initial
data $u_n(t_0) = u_n^0$. For each $j\ge 1$, suppose
$\{\psi^j\}_{1\leq j \leq l} \in C_tL^2_x((-T^j_{\min},T^j_{\max})
\times \mathbb{R}^d)$ is the maximal nonlinear profile associated
with $\{\phi^j\}_{1\leq j \leq l}, (h_n^j, t_n^j)_{1 \leq j \leq l,
n \geq 1}$. Let $\{I_n\}$ be a family of nondecreasing time intervals containing
$0$. Then, the following two are equivalent;
\begin{enumerate}
\item $\| \Ga^j_n \psi^j \|_{L_t^\qq L_x^\rr(I_n \times \mathbb{R}^d)} < \infty, \quad j\ge1,$
\item $\| u_n \|_{L_t^\qq L_x^\rr(I_n \times \mathbb{R}^d)} < \infty.$
\end{enumerate}
Here  $\Gamma^j_n\psi^j = \frac 1{(h^j_n)^{d/2}}\psi^j(\frac {t -
t^j_n}{(h^j_n)^\al},\frac x{h^j_n})$. Moreover, if $(1)$ or $(2)$
holds true, we have a decomposition
$$u_n = \sum_{j=1}^l \Ga^j_n \psi^j + U(\cdot)\omega^l_n + e^l_n$$
with
$$\lim_{l \to \infty}\limsup_{n \to \infty} \big(\|\big(U(\cdot)\omega^l_n\|_{L_t^\qq L_x^\rr(I_n \times \mathbb{R}^d)}
+ \|e^l_n\|_{L_t^\qq L_x^\rr(I_n \times \mathbb{R}^d)} \big)= 0.$$
\end{prop}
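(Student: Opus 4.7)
The plan is to build an explicit approximate solution
$$v^l_n := \sum_{j=1}^l \Ga^j_n \psi^j + U(\cdot)\omega^l_n,$$
compute the residual
$$e^l_n := (i\partial_t + (-\Delta)^{\al/2})v^l_n - \lambda(|x|^{-\al}*|v^l_n|^2)v^l_n,$$
and show it is small in a suitable dual Strichartz norm. A long-time perturbation lemma for \eqref{eqn}, built from the Strichartz estimates for radial data (Lemma~\ref{str-radial}) together with Hardy--Littlewood--Sobolev for the Hartree term, then forces $u_n - v^l_n$ to be negligible in $L^\qq_t L^\rr_x(I_n\times\mathbb R^d)$. This yields both the equivalence $(1)\Leftrightarrow(2)$ and the claimed decomposition, with $u_n - v^l_n$ playing the role of the remainder $e^l_n$ in the statement.

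\textbf{Orthogonal decoupling.} The first step is a Pythagorean-type identity
$$\Big\|\sum_{j=1}^l \Ga^j_n\psi^j\Big\|_{L^\qq_t L^\rr_x(I_n\times\mathbb R^d)}^\qq = \sum_{j=1}^l \|\Ga^j_n\psi^j\|_{L^\qq_t L^\rr_x(I_n\times\mathbb R^d)}^\qq + o_n(1),$$
obtained by expanding the power, changing variables to normalize the $j$-th profile, and showing that every cross term vanishes whenever either $h^j_n/h^k_n + h^k_n/h^j_n\to\infty$ or $|t^j_n - t^k_n|/(h^j_n)^\al \to \infty$; a routine approximation of $\psi^j$ by smooth compactly supported functions reduces this to H\"older. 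Combined with $\|U(\cdot)\omega^l_n\|_{L^q_t L^r_x}\to 0$ from Theorem~\ref{main}, hypothesis~$(1)$ then yields a uniform bound $\|v^l_n\|_{L^\qq_t L^\rr_x(I_n)}\le M$, independent of $n$ and of $l$ for $l$ sufficiently large.

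\textbf{Error estimate.} Because each $\Ga^j_n\psi^j$ solves \eqref{eqn} exactly while $U(\cdot)\omega^l_n$ solves the free equation, $e^l_n$ consists entirely of mixed cubic terms of the form $(|x|^{-\al}*(a\cj b))\,c$ in which at least two of $a,b,c$ carry distinct profile indices or involve $U(\cdot)\omega^l_n$. Using Hardy--Littlewood--Sobolev together with H\"older, each pure cross Hartree term is dominated by space-time norms of factors whose phase-space supports are asymptotically disjoint; a change of variables matching the scale $h^j_n$, followed by dominated convergence, sends these cross terms to $0$. The remaining terms involving $U(\cdot)\omega^l_n$ are controlled via the vanishing of $\|U(\cdot)\omega^l_n\|_{L^q_t L^r_x}$ combined with the uniform $L^2$ bound, so altogether $e^l_n\to 0$ in the appropriate dual Strichartz space as $l,n\to\infty$.

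\textbf{Perturbation and main obstacle.} Since $\|u^0_n - v^l_n(0)\|_{L^2}\to 0$ by the $L^2$-orthogonality in Theorem~\ref{main} and the asymptotic matching defining each $\psi^j$, a standard long-time perturbation argument --- bootstrap via Strichartz on finitely many subintervals of $I_n$ on which $\|v^l_n\|_{L^\qq_t L^\rr_x}$ is small --- yields $\|u_n - v^l_n\|_{L^\qq_t L^\rr_x(I_n)}\to 0$, and hence both $(1)\Rightarrow(2)$ and the stated decomposition. The converse $(2)\Rightarrow(1)$ follows by contradiction: if some $\Ga^j_n\psi^j$ had infinite space-time norm on $I_n$, the same perturbation bound would propagate a blow-up to $u_n$. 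The chief technical obstacle is the cross-term analysis for the Hartree nonlinearity: the nonlocal convolution with $|x|^{-\al}$ couples distant spatial scales, so parameter orthogonality cannot be exploited pointwise, and a careful annular or Littlewood--Paley decomposition of the convolution, together with the radial hypothesis and the flexibility in admissible Strichartz pairs from Lemma~\ref{str-radial}, will be required to close the estimate.
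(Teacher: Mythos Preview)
Your plan is essentially the paper's own argument: define the approximate solution $v^l_n=\sum_j\Ga^j_n\psi^j+U(\cdot)\omega^l_n$, show the nonlinear cross terms vanish by orthogonality, and run a Strichartz bootstrap on subintervals where $\|v^l_n\|_{L^{q_\circ}_tL^{r_\circ}_x}$ is small (the paper does this directly on the equation for $u_n-v^l_n$ rather than invoking a packaged perturbation lemma, but the content is the same). Two small corrections: the decoupling identity is naturally stated with exponent $2$ rather than $q_\circ$, since one writes $\|\sum_j\psi^j_n\|^2=\|\sum|\psi^j_n|^2+\text{cross}\|_{L^{q_\circ/2}L^{r_\circ/2}}$ and bounds each cross term $\|\psi^{j_1}_n\overline{\psi^{j_2}_n}\|_{L^{q_\circ/2}L^{r_\circ/2}}$ via the orthogonality argument of Lemma~\ref{ortho}; and the ``chief technical obstacle'' you anticipate is not actually present, because Hardy--Littlewood--Sobolev already gives $\|(|x|^{-\al}*(v_1v_2))v_3\|_{L^1_tL^2_x}\lesssim\prod_i\|v_i\|_{L^{q_\circ}_tL^{r_\circ}_x}$, reducing the Hartree cross terms to exactly the same product-of-norms orthogonality as for a local cubic nonlinearity---no annular or Littlewood--Paley decomposition of the convolution is needed.
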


\subsubsection*{\textbf{Blowup phenomena}} We now consider the blowup solutions of
\eqref{eqn} and present various results which rely on the nonlinear
profile decomposition.

We first show the existence of minimal mass blowup solution. Due to
lack of compactness of the Strichartz estimate, we do not expect
that a bounded sequence has a convergent subsequence. However, the
extremal sequence has a convergent subsequence and its limit. This
can be viewed as a Palais-Smale type theorem (see \cite{lion}).
\begin{thm}\label{th:minimal blow up}
There exists a blowup solution $u$ to $\eqref{eqn}$ with initial
data $f \in L^2$ of $\|f\|_{L^2} = \delta_0$. Moreover, $\{u(t) \in
L^2 :  -T_{min} < t < T_{max}  \} $ is compact in $L^2$ modulo
symmetries. That is, for any sequence $\{u(t_n)\}$ with $t_n \in
(-T_{min},T_{max} )$, there exist $\phi \in L^2$ and a subsequence,
still called $\{t_n\}$ and $\{h_n\}$, such that
$$   h_n^{d/2}u(t_n,h_nx) \to \phi \quad {in} \,\, L^2. $$
\end{thm}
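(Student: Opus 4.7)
Both halves follow from a Keraani/Kenig--Merle-style dichotomy applied to Theorem~\ref{main} and Proposition~\ref{nlpf}: a mass-minimizing sequence admits exactly one nontrivial profile, and that profile carries the full critical mass $\delta_0$. I would carry this out in the two steps below.

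\emph{Minimal blowup solution.} By Definition~\ref{minimal mass}, pick $f_n\in L^2_x$ with $\|f_n\|_{L^2}\searrow\delta_0$ whose maximal solutions $u_n$ satisfy $\|u_n\|_{L_t^\qq L_x^\rr}=\infty$. Apply Theorem~\ref{main} to $\{f_n\}$; the asymptotic Pythagorean identity $\sum_j\|\phi^j\|_{L^2}^2+\lim_l\limsup_n\|\omega_n^l\|_{L^2}^2\le\delta_0^2$ forces $\|\phi^j\|_{L^2}\le\delta_0$. If strict inequality held for every $j$, the definition of $\delta_0$ would make every nonlinear profile $\psi^j$ global with finite $L_t^\qq L_x^\rr$-norm, and Proposition~\ref{nlpf} would yield $\|u_n\|_{L_t^\qq L_x^\rr}<\infty$ for large $n$, a contradiction. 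Hence some profile, relabeled $\phi^1$, has $\|\phi^1\|_{L^2}=\delta_0$; then all other $\phi^j$ vanish and $\omega_n\to 0$ in $L^2$. The associated nonlinear profile $\psi^1$ has mass $\delta_0$, and if $\|\psi^1\|_{L_t^\qq L_x^\rr}<\infty$ Proposition~\ref{nlpf} would again contradict the blowup of $u_n$. So $\psi^1$ blows up, and a time translation produces a blowup solution with initial datum $f\in L^2$ of $\|f\|_{L^2}=\delta_0$.

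\emph{Orbit compactness.} Let $u$ be this minimal solution and $\{t_n\}\subset(-T_{\min},T_{\max})$. Each $u(\cdot+t_n)$ is a blowup solution on its own maximal interval, so running the dichotomy on $\{u(t_n)\}$ again isolates exactly one profile with $\|\phi^1\|_{L^2}=\delta_0$ and $\omega_n\to 0$. The scaling identity $U(t)[h^{-d/2}f(\cdot/h)]=h^{-d/2}[U(t/h^\al)f](\cdot/h)$, combined with a rescaling by $h_n^1$, produces
\[
(h_n^1)^{d/2}\,u\bigl(t_n,\,h_n^1 x\bigr)\;=\;\bigl(U(\tau_n)\phi^1\bigr)(x)+\widetilde\omega_n(x),\qquad \tau_n:=t_n^1/(h_n^1)^\al,
\]
with $\widetilde\omega_n\to 0$ in $L^2$. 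After extracting a subsequence, $\tau_n\to\tau_\infty\in[-\infty,\infty]$; in the case $\tau_\infty\in\RR$, strong $L^2$-continuity of $U(\cdot)$ yields the required convergence with $\phi:=U(\tau_\infty)\phi^1$.

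The main obstacle is ruling out $\tau_\infty=\pm\infty$. Suppose $\tau_n\to+\infty$ (the opposite case is symmetric). Scale-invariance under the admissibility $\al/\qq+d/\rr=d/2$ and the global Strichartz bound $\|U(\cdot)\phi^1\|_{L_t^\qq L_x^\rr(\RR\times\RR^d)}<\infty$ give
\[
\bigl\|U(\cdot)\,u(t_n)\bigr\|_{L_t^\qq L_x^\rr([0,\infty)\times\RR^d)}=\|U(\cdot)\phi^1\|_{L_t^\qq L_x^\rr([\tau_n,\infty)\times\RR^d)}+o(1)\longrightarrow 0.
\]
A small-data perturbation argument based on Lemma~\ref{str-radial} then makes $u(\cdot+t_n)$ forward-global with arbitrarily small $L_t^\qq L_x^\rr$-norm on $[0,\infty)$, and invoking Proposition~\ref{nlpf} together with forward scattering of the wave-operator profile $\psi^1$ extends this bound to the full maximal interval of $u$, contradicting $\|u\|_{L_t^\qq L_x^\rr}=\infty$. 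Thus $\{\tau_n\}$ must stay bounded, and the desired compactness modulo spatial rescaling follows.
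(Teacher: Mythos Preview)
Your approach is the same as the paper's---profile decomposition combined with the mass dichotomy---but you supply substantially more detail, particularly for the orbit compactness, where the paper writes only ``we apply the above argument to the sequence $\{u(t_n)\}$''. You correctly recognize that after isolating the single profile $\phi^1$ with $\|\phi^1\|_{L^2}=\delta_0$ and deducing $\omega_n\to 0$ in $L^2$, one still must rule out $\tau_n\to\pm\infty$ before concluding strong convergence of $(h_n^1)^{d/2}u(t_n,h_n^1\,\cdot)$.

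There is, however, a genuine gap in your last paragraph. You show that $\tau_n\to+\infty$ forces $\|U(\cdot)u(t_n)\|_{L_t^\qq L_x^\rr([0,\infty))}\to 0$ and hence, by small-data theory, $\|u\|_{L_t^\qq L_x^\rr([t_n,T_{\max}))}<\infty$. But this contradicts only \emph{forward} blowup of $u$; a minimal blowup solution could a priori scatter forward and blow up only backward. Your assertion that ``invoking Proposition~\ref{nlpf} together with forward scattering of the wave-operator profile $\psi^1$ extends this bound to the full maximal interval of $u$'' is not justified: the nonlinear profile $\psi^1$ (defined via Lemma~\ref{wpasy} when $\tau_n\to+\infty$) has finite norm on $[T,\infty)$, but nothing prevents $\psi^1$ itself from blowing up backward---it would then simply be another minimal-mass blowup solution that scatters forward, and no contradiction arises.

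The standard remedy is to show first that the critical element $u$ blows up in \emph{both} time directions; once that is known, your argument goes through verbatim, since $\tau_n\to+\infty$ then immediately contradicts forward blowup (and symmetrically for $-\infty$). The paper's own proof is too terse to address this point, but its proof of Theorem~\ref{cpbs} does contain the relevant mechanism: one applies Proposition~\ref{nlpf} separately on forward and backward intervals $I_n$, and uses Lemma~\ref{wpasy} to exclude $s^{j_0}=+\infty$ and $s^{j_0}=-\infty$ respectively. Since here the mass is exactly $\delta_0$ (rather than $<\sqrt 2\,\delta_0$), there is only one profile available to absorb blowup in each direction, so the two exclusions must apply to the \emph{same} profile, forcing $\tau_n$ to be bounded.
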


If the mass is greater than the minimal mass($=\delta_0^2$) but less
than twice of $\delta_0^2$, then the blowup solution does not
form more than one blowup profile. Thus, we still have a weaker form
of compactness property of the blowup solutions.
\begin{thm}\label{cpbs}
Let $u$ be finite time blowup solution of \eqref{eqn} at $T^*$ with
$\|u(0,\cdot)\|_{L^2_x} < \sqrt{2}\delta_0$ and let  $t_n
\nearrow T^*$. Then there exist $\phi \in L^2_x$ and
$\{h_n\}_{n=1}^\infty$ satisfying \begin{equation} \label{weakcon}
h^{d/2}_n u(t_n, h_nx) \rightharpoonup \phi \text { weakly in }
L^2_x\end{equation}
 and if solution of \eqref{eqn} with initial data $\phi$ blows up at
$T^{**}$
\begin{equation}
\label{hhh} \limsup_{n \rightarrow \infty} \frac {h_n}{(T^* -
t_n)^{1/\alpha}} \leq \frac 1{(T^{**})^{1/\alpha}}. \end{equation}
\end{thm}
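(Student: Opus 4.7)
My plan is to apply the linear profile decomposition of Theorem~\ref{main} to $\{u(t_n)\}$, use the bound $\|u(0)\|_{L^2}<\sqrt{2}\,\delta_0$ to single out a unique profile of mass $\geq\delta_0$, and then convert its maximal forward existence interval into the required lower bound on $T^*-t_n$ via Proposition~\ref{nlpf}. By mass conservation, $\|u(t_n)\|_{L^2}^2<2\delta_0^2$, so Theorem~\ref{main} yields profiles $(\phi^j)$, parameters $(h_n^j,t_n^j)$, and remainders $\omega_n^l$ whose $L^2$-masses satisfy $\sum_{j\geq 1}\|\phi^j\|_{L^2}^2<2\delta_0^2$; at most one $j$ can then have $\|\phi^j\|_{L^2}\geq\delta_0$. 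Conversely, if every profile had $\|\phi^j\|_{L^2}<\delta_0$, the definition of $\delta_0$ would make each nonlinear profile $\psi^j$ global with finite $L_t^\qq L_x^\rr(\RR\times\RR^d)$ norm, and Proposition~\ref{nlpf} applied to $u_n(t):=u(t_n+t)$ on $I_n=\RR$ would force $\|u_n\|_{L_t^\qq L_x^\rr(\RR\times\RR^d)}<\infty$, contradicting the blowup at $T^*$. Thus exactly one profile $\phi^1$ has $\|\phi^1\|_{L^2}\geq\delta_0$, while all other $\psi^j$ are globally bounded.

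To identify the weak limit I will set $h_n:=h_n^1$ and, after passing to a subsequence, assume $\tau_n:=t_n^1/(h_n^1)^\al\to\tau\in[-\infty,\infty]$. Rescaling each linear profile via $U(t)[\rho^{-d/2}g(\cdot/\rho)](x)=\rho^{-d/2}[U(t/\rho^\al)g](x/\rho)$ one obtains
\[
h_n^{d/2}u(t_n,h_n\cdot)=U(\tau_n)\phi^1+\sum_{2\le j\le l}R_n^j+h_n^{d/2}\omega_n^l(h_n\cdot),
\]
where each $R_n^j\rightharpoonup 0$ in $L^2$ thanks to the asymptotic orthogonality of $(h_n^j,t_n^j)$ with $(h_n^1,t_n^1)$ (plus a Riemann--Lebesgue argument in the case of matching scales with diverging time gaps), while the tail is weakly negligible as $l,n\to\infty$. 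When $\tau$ is finite, $U(\tau_n)\phi^1\to U(\tau)\phi^1$ strongly in $L^2$, and taking weak limits gives \eqref{weakcon} with $\phi:=U(\tau)\phi^1$. Absorbing $\tau$ into the parameters (replace $\phi^1$ by $U(\tau)\phi^1$ and shift $t_n^1$ accordingly) I may assume $\tau=0$, so that $\psi^1(0)=\phi$ and $T^{**}=T_{\max}^1$.

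For the blowup rate, fix any $\eta<T^{**}$. The maximality of $T^{**}$ gives $\|\psi^1\|_{L_t^\qq L_x^\rr([0,\eta])}<\infty$, and the mass-critical scaling invariance $\al/\qq+d/\rr=d/2$ yields
\[
\|\Ga_n^1\psi^1\|_{L_t^\qq L_x^\rr([0,\eta h_n^\al])}=\|\psi^1\|_{L_t^\qq L_x^\rr([-\tau_n,\,\eta-\tau_n])}\longrightarrow \|\psi^1\|_{L_t^\qq L_x^\rr([0,\eta])}<\infty.
\]
Combined with the global spacetime bounds on the small-mass profiles, hypothesis~(1) of Proposition~\ref{nlpf} is satisfied on $I_n=[0,\eta h_n^\al]$, forcing $\|u_n\|_{L_t^\qq L_x^\rr(I_n)}<\infty$. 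Hence $u$ does not blow up before time $t_n+\eta h_n^\al$, so $T^*-t_n>\eta h_n^\al$; letting $\eta\nearrow T^{**}$ then yields \eqref{hhh}. The main technical point is the uniform verification of Proposition~\ref{nlpf}(1) on the expanding intervals $[0,\eta h_n^\al]$: scaling invariance of $L_t^\qq L_x^\rr$ freezes the dominant profile's spacetime norm at the fixed quantity $\|\psi^1\|_{L_t^\qq L_x^\rr([0,\eta])}$, and control of the remaining profiles comes directly from the definition of $\delta_0$. The degenerate case $\tau=\pm\infty$ (in which $\phi=0$ and $T^{**}=\infty$) can be handled by choosing $h_n$ slightly smaller than $h_n^1$ so as to force $h_n=o((T^*-t_n)^{1/\al})$, and adds no essential content.
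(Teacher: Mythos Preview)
Your overall strategy—profile-decompose $u(t_n)$, use the mass bound to isolate a single profile of mass $\ge\delta_0$, and read off \eqref{hhh} from Proposition~\ref{nlpf}—is exactly the paper's. Your derivation of \eqref{hhh} via the implication $(1)\Rightarrow(2)$ on $[0,\eta h_n^\alpha]$ is just the contrapositive of what the paper does (it applies the direction ``(2) fails $\Rightarrow$ (1) fails'' on $I_n=[0,T^*-t_n]$ to exhibit a profile $\psi^{j_0}$ with $\|\psi^{j_0}\|_{L^\qq_tL^\rr_x(I_n^{j_0})}=\infty$).

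The real gap is your treatment of the case $\tau=\pm\infty$. Your remark that one can ``choose $h_n$ slightly smaller than $h_n^1$ so as to force $h_n=o((T^*-t_n)^{1/\alpha})$'' is not an argument, and in any event if $\phi=0$ the theorem collapses to a vacuous statement (the solution with zero data does not blow up, so \eqref{hhh} is empty), which is useless for Corollary~\ref{mass concentration} where one needs $\|\phi\|_{L^2}\ge\delta_0$. The paper rules these cases out with a genuine extra ingredient you are missing: it uses not only the forward blowup on $[0,T^*-t_n]$ but also the \emph{backward} blowup $\limsup_n\|u_n\|_{L^\qq_tL^\rr_x([-t_n,0])}=\infty$. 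One limiting value of $\tau$ is excluded by Lemma~\ref{wpasy} (the corresponding nonlinear profile scatters in that time direction, contradicting forward blowup). For the other value the paper runs the same argument on $[-t_n,0]$ to produce a second blowup profile $\tilde\jmath_0$; the hypothesis $\|u(0)\|_{L^2}<\sqrt2\,\delta_0$ then forces $\tilde\jmath_0=j_0$, and Lemma~\ref{wpasy} applied in the backward direction gives the contradiction. You should incorporate this two-sided argument; without it you cannot conclude that the time shift is finite, hence cannot identify a nontrivial weak limit.
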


Under the same condition, when a blowup occurs, only one profile
blows up by shrinking in scale. As a corollary, we obtain the
concentration in $L^2$-norm at blowup time. For related results
when $\al > 2$, see \cite{chl}. More precisely, we have

\begin{cor}{(Mass concentration of finite time blowup solution)}\label{mass concentration}
Let $u$ be a finite time blowup solution at $T^*$ with
$\|u(0,x)\|_{L^2} < \sqrt{2}\delta_0$ and let  $t_n \nearrow
T^*$. Then
\begin{equation}\label{masscon}
\limsup_{n \rightarrow \infty} \int_{|x| \leq \lambda(t_n)}
|u(t_n,x)|^2 dx \geq \delta_0^2 \end{equation} for $\lambda(t_n)$
satisfying $\frac {(T^* - t_n)^{1/\alpha}}{\lambda(t_n)} \rightarrow
0$.
\end{cor}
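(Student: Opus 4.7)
The plan is to derive \eqref{masscon} as an essentially immediate consequence of the compactness statement in Theorem~\ref{cpbs}. The key observation is that under the rescaling $x = h_n y$ supplied by Theorem~\ref{cpbs}, the ball $\{|x|\le\lambda(t_n)\}$ becomes $\{|y|\le\lambda(t_n)/h_n\}$, and the hypothesis $(T^*-t_n)^{1/\alpha}/\lambda(t_n)\to 0$ combined with \eqref{hhh} forces $\lambda(t_n)/h_n\to\infty$. Thus, at the level of the rescaled functions $v_n(y):=h_n^{d/2}u(t_n,h_ny)$, the integrals in \eqref{masscon} are taken over expanding balls, and the proof reduces to showing that the weak $L^2$-limit of $v_n$ carries at least $\delta_0^2$ mass.

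The first step would be to invoke Theorem~\ref{cpbs} to extract, along a subsequence, $\phi\in L^2$ and scales $h_n>0$ with $v_n\rightharpoonup\phi$ weakly in $L^2$. Since $u$ blows up at $T^*$ and $\|u(0)\|_{L^2}<\sqrt{2}\delta_0$ precludes more than one blowing-up profile in the nonlinear profile decomposition of $\{u(t_n)\}$ furnished by Proposition~\ref{nlpf}, the surviving profile must be $\phi$, and its nonlinear evolution must itself blow up at some $T^{**}\in(0,\infty]$. By the very definition of the minimal mass (Definition~\ref{minimal mass}), this forces $\|\phi\|_{L^2}\ge\delta_0$, and \eqref{hhh} provides the scale bound $\limsup_n h_n/(T^*-t_n)^{1/\alpha}\le (T^{**})^{-1/\alpha}$.

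Next, I would change variables $x=h_n y$ to obtain
\begin{equation*}
\int_{|x|\le\lambda(t_n)}|u(t_n,x)|^2\,dx=\int_{|y|\le\lambda(t_n)/h_n}|v_n(y)|^2\,dy.
\end{equation*}
For arbitrary fixed $R>0$, the bound $\lambda(t_n)/h_n\to\infty$ guarantees $\lambda(t_n)/h_n>R$ for $n$ large, so the right-hand side dominates $\int_{|y|\le R}|v_n|^2\,dy$. The weak convergence $v_n\rightharpoonup\phi$ in $L^2$ pairs with the test function $\mathbf{1}_{\{|y|\le R\}}\phi$ to give, via Cauchy--Schwarz, the lower semicontinuity
\begin{equation*}
\liminf_{n\to\infty}\int_{|y|\le R}|v_n|^2\,dy\ge\int_{|y|\le R}|\phi|^2\,dy.
\end{equation*}
Sending $R\to\infty$ and using $\|\phi\|_{L^2}^2\ge\delta_0^2$ produces \eqref{masscon}.

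The one delicate point, and in my view the main obstacle, is rigorously identifying the weak limit $\phi$ of Theorem~\ref{cpbs} as a \emph{blowup} datum, so that the minimality of $\delta_0$ can legitimately be applied. This requires revisiting the profile-decomposition argument underlying Theorem~\ref{cpbs}: the sub-threshold bound $\|u(0)\|_{L^2}<\sqrt{2}\delta_0$ permits at most one nonlinear profile to blow up, and the blowup of $u$ at $T^*$ forces precisely that profile to be nontrivial and to coincide with $\phi$. Once this identification is in place, the remainder is a standard weak-compactness-plus-change-of-variables argument.
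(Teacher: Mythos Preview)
Your proposal is correct and follows essentially the same route as the paper: invoke Theorem~\ref{cpbs} to obtain $\phi$ and the scales $h_n$, use \eqref{hhh} together with the hypothesis on $\lambda(t_n)$ to get $h_n/\lambda(t_n)\to 0$, apply weak lower semicontinuity of the $L^2$-norm on balls, and finally let $R\to\infty$ using $\|\phi\|_{L^2}\ge\delta_0$. The ``delicate point'' you flag---that $\phi$ must be a blowup datum so that $\|\phi\|_{L^2}\ge\delta_0$---is exactly what the paper also uses; it is established in the \emph{proof} of Theorem~\ref{cpbs} (the profile $\phi=\phi^{j_0}$ is selected so that its nonlinear evolution blows up) rather than in its statement, and the paper's proof of the corollary simply cites this fact without further comment.
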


The arguments in this paper can be modified to prove the same
results (nonlinear profile decomposition and blowup phenomena) for
the equations which have the power-type mass critical nonlinearities
as long as we assume that blowup occurs. But the existence of blowup solutions does not seem to be known yet for the fractional
Schr\"odinger equations with power-type nonlinearities.



The rest of the paper is organized as follows: In Section 2, we will
show the refined Strichartz estimate. Section 3 will be devoted to
establishing linear profile decomposition. Then we will show the nonlinear profile decomposition in Section 4. In Section 5 we will study blowup phenomena by making use of the profile decomposition.


%
%
%
%
%
%
%
%

%
%
%

\section{Refined Strichartz Estimates}\label{sec2}
It has been known that the Strichartz estimates for dispersive
equations have  wider admissible ranges  when the initial data $f$
are radial \cite{guwa, cholee}. Recently,  almost optimal range of
admissible pairs was established in \cite{guwa} and the range was
further extended in \cite{cholee, ke} to
include the remaining endpoint cases.  We now recall from
\cite{cholee} that
\begin{equation}\label{basic}
\| U(\cdot)P_0 f \|_{L^q_tL^r_x} \lesssim \|f\|_2 \end{equation}
holds whenever $q,r\ge 2$,  $(q, r) \neq (2, \frac{2(2d-1)}{2d-3})$,
and $\frac1q \le \frac{2d-1}{2}(\frac12 - \frac1r)$.

Let $P_k, k \in \mathbb Z,$ denote the Littlewood-Paley
projection operator with symbol $\chi(\xi/2^k) \in C_0^\infty$
supported in the annulus $A_k = \{2^{k-1} < |\xi| \le
2^{k+1}\}$ such that $\sum_{k\in \mathbb Z} P_k=id$. By \eqref{basic}, Littlewood-Paley decomposition and rescaling we get
the following.

\begin{lem}\label{str-radial}
Let $\frac{2d}{2d-1} < \al < 2$, $q,r \geq 2,$ and $ r \neq \infty$,
and let $\beta(\al,q,r) = d/2 - d/r - \al /q$.
 If $(q, r) \neq (2, \frac{2(2d-1)}{2d-3})$ and $\frac 1q \leq
\frac{2d-1}{2}(\frac 12 - \frac 1r)$, then for radial $f$,
$$\| U(\cdot) f \|_{L^q_tL^r_x} \lesssim \Big(\sum_{k \in \mathbb Z} 2^{2k\beta(\al,q,r)}\|P_kf\|^2_2\Big)^{1/2}.
$$\end{lem}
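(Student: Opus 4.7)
The plan follows the route indicated in the paragraph preceding the lemma: combine the frequency-localized radial estimate \eqref{basic} with a Littlewood--Paley square function argument and parabolic-style rescaling. First I would decompose $U(t)f=\sum_{k\in\mathbb Z}U(t)P_kf$ and, since $1<r<\infty$, invoke the spatial Littlewood--Paley square function theorem (pointwise in $t$, then integrated) to obtain
$$\|U(\cdot)f\|_{L^q_tL^r_x}\lesssim \Big\|\Big(\sum_{k}|U(\cdot)P_kf|^2\Big)^{1/2}\Big\|_{L^q_tL^r_x}.$$
Using $q,r\ge 2$, I would then apply Minkowski's inequality once in $x$ (since $r/2\ge 1$) and once in $t$ (since $q/2\ge 1$) to exchange the $\ell^2$ sum with the space-time norm, arriving at
$$\|U(\cdot)f\|_{L^q_tL^r_x}^2\lesssim \sum_{k}\|U(\cdot)P_kf\|_{L^q_tL^r_x}^2.$$

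Each frequency-localized term is then handled by the natural scaling of the propagator. Writing $P_kf(x)=2^{kd/2}h(2^kx)$ for the radial rescaled function $h$ with Fourier support in $\{|\eta|\sim 1\}$ and $\|h\|_2=\|P_kf\|_2$, a direct computation using the identity
$$U(t)[2^{kd/2}h(2^k\cdot)](x)=2^{kd/2}\,(U(2^{k\alpha}t)h)(2^kx),$$
together with the corresponding change of variables in $t$ and $x$, yields
$$\|U(\cdot)P_kf\|_{L^q_tL^r_x}=2^{k(d/2-d/r-\alpha/q)}\|U(\cdot)h\|_{L^q_tL^r_x}=2^{k\beta(\alpha,q,r)}\|U(\cdot)h\|_{L^q_tL^r_x}.$$
Since $h$ is radial and can be written as $\widetilde P_0 h$ for a slightly fattened dyadic cutoff $\widetilde P_0$ (which enjoys the same $L^2$--$L^q_tL^r_x$ bound as $P_0$ under the hypothesized admissibility), \eqref{basic} gives $\|U(\cdot)h\|_{L^q_tL^r_x}\lesssim \|h\|_2=\|P_kf\|_2$. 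Squaring and summing in $k$ produces the desired inequality.

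No substantive obstacle is anticipated: the argument is a fairly mechanical combination of \eqref{basic}, the spatial Littlewood--Paley theorem, and Minkowski's inequality. The role of the hypothesis $q,r\ge 2$ is precisely to justify the Minkowski step, while $1<r<\infty$ is what allows the square-function theorem; both match the hypotheses of the lemma, and the admissibility conditions $(q,r)\neq(2,\tfrac{2(2d-1)}{2d-3})$ and $\tfrac1q\le\tfrac{2d-1}{2}(\tfrac12-\tfrac1r)$ are exactly those required for \eqref{basic}. The radial assumption on $f$ passes unchanged through Littlewood--Paley projection and rescaling, so it is inherited by each $h$ and triggers the radial version of \eqref{basic} at unit scale.
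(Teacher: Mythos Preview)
Your proposal is correct and follows essentially the same approach the paper indicates: the paper's proof is the single sentence preceding the lemma (``By \eqref{basic}, Littlewood--Paley decomposition and rescaling we get the following''), and indeed in the proof of \eqref{ref-3} the paper spells out the very same Littlewood--Paley square function plus Minkowski step you describe to reach $\|U(\cdot)f\|_{L^q_tL^r_x}\le(\sum_k\|U(\cdot)P_kf\|_{L^q_tL^r_x}^2)^{1/2}$. The rescaling computation and the application of \eqref{basic} at unit scale are exactly what is intended.
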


Adapting the argument of \cite{chl} together with Lemma
\ref{str-radial}, we get bilinear  estimates for $U$ which give
extra smoothing due to interaction of two waves at different
frequency levels.
\begin{lem}\label{bilinear-est}
Let $\frac{2d}{2d-1} < \al < 2$, $q,r > 2,$ and $ r \neq \infty$.
Suppose that $f$ and $g$ are radial. Then, for $\frac 1q <
\frac{2d-1}{2}(\frac 12 - \frac 1r)$, then there exists $\epsilon =
\epsilon(\al, q, r) > 0$ such that
$$\|U(\cdot)P_jf \;U(\cdot)P_kg\|_{L^{q/2}_tL^{r/2}_x} \lesssim 2^{(j+k)\beta(\al,q,r)}2^{|j-k|\epsilon}\|f\|_{L^2}\|g\|_{L^2}.$$
\end{lem}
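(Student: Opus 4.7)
The plan is to prove the bilinear estimate by combining Hölder's inequality in space-time with the single-dyadic case of Lemma \ref{str-radial}, applied at two perturbed admissible pairs whose scaling exponents straddle $\beta(\alpha,q,r)$. The strict admissibility hypothesis $\frac{1}{q} < \frac{2d-1}{2}(\frac{1}{2}-\frac{1}{r})$ provides the slack needed to carry out this perturbation, and the factor $2^{|j-k|\epsilon}$ will fall out of the asymmetric assignment of the two perturbed pairs to the low- and high-frequency factors.

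Concretely, I would fix a small $t>0$ and take $q_1=q_2=q$ together with $\frac{1}{r_1}=\frac{1}{r}+t$ and $\frac{1}{r_2}=\frac{1}{r}-t$, so that $\frac{1}{q_1}+\frac{1}{q_2}=\frac{2}{q}$ and $\frac{1}{r_1}+\frac{1}{r_2}=\frac{2}{r}$. One then computes $\beta(\alpha,q,r_1)=\beta(\alpha,q,r)-dt$ and $\beta(\alpha,q,r_2)=\beta(\alpha,q,r)+dt$. For $t$ sufficiently small, both $(q,r_1)$ and $(q,r_2)$ lie in the admissible range of Lemma \ref{str-radial}: since $2<r<\infty$ we have $r_1,r_2\in(2,\infty)$; the strict slack $\frac{2d-1}{2}(\frac{1}{2}-\frac{1}{r})-\frac{1}{q}>0$ absorbs the perturbation $\frac{(2d-1)t}{2}$ in the admissibility inequality; and the excluded endpoint $(2,\frac{2(2d-1)}{2d-3})$ is avoided because $q>2$.

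Assume without loss of generality that $j\le k$. By Hölder's inequality in $x$ and then in $t$,
\[
\|UP_jf\cdot UP_kg\|_{L^{q/2}_tL^{r/2}_x} \le \|UP_jf\|_{L^{q}_tL^{r_1}_x}\,\|UP_kg\|_{L^{q}_tL^{r_2}_x}.
\]
Since the Littlewood-Paley multiplier is radial, $P_jf$ and $P_kg$ remain radial, and Lemma \ref{str-radial} (with the square function collapsing to a single dyadic term) gives $\|UP_jf\|_{L^q_tL^{r_1}_x}\lesssim 2^{j(\beta-dt)}\|P_jf\|_2$ and $\|UP_kg\|_{L^q_tL^{r_2}_x}\lesssim 2^{k(\beta+dt)}\|P_kg\|_2$. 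Multiplying, setting $\epsilon:=dt>0$, and using $\|P_jf\|_2\le\|f\|_2$, $\|P_kg\|_2\le\|g\|_2$, we obtain
\[
\|UP_jf\cdot UP_kg\|_{L^{q/2}_tL^{r/2}_x}\lesssim 2^{(j+k)\beta+(k-j)\epsilon}\|f\|_2\|g\|_2.
\]
For the symmetric case $j>k$, we swap the roles of the two perturbed pairs (estimating $UP_jf$ in $L^q_tL^{r_2}_x$ and $UP_kg$ in $L^q_tL^{r_1}_x$), which produces $2^{(j+k)\beta+(j-k)\epsilon}\|f\|_2\|g\|_2$. Combining the two cases yields the stated bound with exponent $|j-k|\epsilon$.

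The main obstacle is essentially bookkeeping, namely verifying that the perturbed pairs $(q,r_i)$ remain strictly inside the admissibility region of Lemma \ref{str-radial}; this is precisely what the strict inequality in the hypothesis buys us, and it is what rules out the excluded endpoint. No genuine bilinear Fourier restriction argument is needed for the stated loss $2^{+|j-k|\epsilon}$: it arises automatically from the freedom in admissibility, rather than from any frequency-separation cancellation between the two linear waves.
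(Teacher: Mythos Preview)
Your argument is correct for the inequality as literally stated, with the loss factor $2^{+|j-k|\epsilon}$. However, the statement carries a sign typo: the paper's own proof establishes the stronger estimate with $2^{-|j-k|\epsilon}$, and it is this \emph{gain} from frequency separation that is actually used downstream (in the proof of \eqref{l2-half} within Proposition~\ref{ref-str-prop}, where one needs $\sum_l 2^{-|l|\epsilon}<\infty$ to sum over dyadic separations). Your H\"older-plus-perturbed-Strichartz argument cannot produce such a gain: whichever way you assign the perturbed exponents, putting the smaller $\beta$ on the low-frequency factor and the larger $\beta$ on the high-frequency factor inevitably yields $2^{+|j-k|\epsilon}$, while the reverse assignment gives $2^{-|j-k|\epsilon}$ only at the price of an illegal exponent pair on one side.

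The paper instead proves a genuine bilinear $L^2_{t,x}$ estimate. After rescaling to $k=0$, $j=\ell\le 0$, it writes $U P_\ell f\cdot U P_0 g$ as an oscillatory integral, freezes the transverse variables $\bar\xi$, changes variables $(\xi_1,\eta)\mapsto(\xi+\eta,\,|\xi|^\alpha+|\eta|^\alpha)$ with Jacobian $\sim 1$ (this is where the transversality of $\nabla|\xi|^\alpha$ at separated frequency scales enters), and applies Plancherel; the integration over $|\bar\xi|\lesssim 2^\ell$ then produces the factor $2^{\ell(d-1)/2}$. Since $(d-1)/2>\beta(\alpha,4,4)$, this gives \eqref{reduction} with a strictly positive gain, and interpolation with the trivial H\"older bound (your $\epsilon=0$ case) transfers the gain to the full open range $\frac1q<\frac{2d-1}{2}(\frac12-\frac1r)$. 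So the ``genuine bilinear Fourier restriction argument'' you set aside is precisely what the intended lemma requires.
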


\begin{proof} By symmetry we may assume that $k\ge j$ and we set $\ell=j-k \le
0$. Then, by rescaling it suffices to show, for some $\epsilon>0$,
\begin{align}\label{pre-est}
\|U(\cdot)P_{\ell }f\; U(\cdot)P_0g\|_{L^{\frac q2}_tL^{\frac r2}_x}
\lesssim 2^{\ell(\beta(\al, q,r)+ \epsilon)}\|f\|_2\|g\|_2.
\end{align}
This estimate \eqref{pre-est} with $\epsilon=0$  obviously holds for
$\frac1q \le \frac{2d-1}{2}(\frac12 - \frac1r)$, which follows from
Lemma \ref{str-radial} and H\"{o}lder inequality. We can then
interpolate this with the estimate
\begin{align}\label{reduction}
\|U(\cdot)P_{\ell}f\;U(\cdot)P_0g\|_{L^2_{t,x}} &\lesssim
2^{\ell(\beta(\al,4,4) + \epsilon)}\|f\|_2\|g\|_2
\end{align}
for some $\epsilon > 0$ to get \eqref{pre-est} for $\frac 1q <
\frac{2d-1}{2}(\frac 12 - \frac 1r)$. Hence we reduce to showing
\eqref{reduction}.

\medskip

 When $2^\ell \sim 1$  \eqref{reduction} is trivial from
Lemma \ref{str-radial}. Thus we assume $2^\ell \ll 1$. By finite
decomposition, rotation and a mild dilation, we also may assume that
$\widehat g$ is supported $\subset B(e_1, \epsilon)$. Here $e_1 =
(1,0,\cdots,0)$ and $B(e_1, \epsilon)$ is the ball of radius
$\epsilon$ centered at $e_1$. %
 Freezing $\bar \xi=(\xi_2,\dots, \xi_d)\in B(0,1)$,
 we set
 \[ B_{\bar \xi} (f,g)(x,t)=\frac1{(2\pi)^{2d}}\int e^{i(\xi + \eta)x + it(|\xi|^\al
+ |\eta|^\al)}\widehat{P_\ell f}(\xi)\widehat{P_0g}(\eta) d\xi_1
d\eta.\] Then it follows that \begin{equation}\label{bibi}
U(t)P_\ell f \;U(t)P_0g= \int B_{\bar \xi} (f,g)(x,t)
\,d\bar\xi.\end{equation} We make change of variables $(\xi_1, \eta)
\to \zeta= (\xi_1 + \eta_1, \cdots, \xi_d + \eta_d, |\xi|^\al +
|\eta|^\al)$ for $B_{\bar \xi} (f,g)(x,t)$. Then, by  noting
$\Big|\frac {\partial{(\zeta_1, \cdots,
\zeta_{d+1})}}{\partial{(\eta_1, \cdots, \eta_d, \xi_1})}\Big| =
\alpha\big|\,\xi_1|\xi|^{\al -2} - \eta_1|\eta|^{\al - 2}\big| \sim
1,$  Plancherel's theorem, and reversing change of variables (
$\zeta\to (\xi_1, \eta)$ ), we get
\[ \|B_{\bar \xi} (f,g)\|_{L^2_{t,x}}\le C\|\widehat{P_\ell f}(\xi)\widehat{P_0g}(\eta)\|_{L^2_{\xi_1,\eta}}.\]
Therefore, by \eqref{bibi}, Minkowski's inequality, and H\"older's
inequality we get
\begin{align*}
\|U(\cdot)P_\ell fU(\cdot)P_0g\|_{L^2_{t,x}}
&\lesssim \int \|\widehat{P_\ell f}(\xi)\widehat{P_0g}(\eta)
\|_{L^2_{\xi_1,\eta}}d\bar\xi\\
&\lesssim \|g\|_2 \int \|\widehat{P_\ell
f}(\xi)\|_{L^2_{\xi_1}}\chi_{\{|\bar\xi| < 2^{\ell+1}\}} d\bar\xi
&\lesssim 2^{\ell (\frac {d-1}2)}\|f\|_2\|g\|_2.
\end{align*}
Since $\al \ge 0$ and $d \ge 2$, $\beta(\al,4,4) < \frac{d-1}2$.
Hence, we can take  $\epsilon=\frac{d-1}2 - \beta(\al,4,4)$. This
completes the proof of Lemma \ref{bilinear-est}.
\end{proof}

The estimate in Lemma \ref{str-radial} can be strengthened to get
so-called \emph{refinements of Stirchartz estimate} (\cite{bo1, mvv, mvv1}). It
plays crucial role in the proof of profile decomposition. Thanks to
radial symmetry,   such refinement is much easier to obtain.  Here
we make use of the argument in \cite{chl} where high order cases
($\alpha>2$) were treated.

For $\alpha<2$,  let us call the pair $(q,r)$ $\al-$admissible,
provided that $\frac \al q + \frac dr = \frac d2$ for $2 \le q, r
\le \infty$.

\begin{prop}\label{ref-str-prop}
Let $\frac{2d}{2d-1} < \al < 2$, $q > 2$ and $r \neq \infty$. If
$(q,r)$ be $\al-$admissible, then there exist $\theta, p$,  $\theta
\in (0,1), 1 \leq p < 2$, such that
\begin{align}\label{ref-str}
\|U(\cdot)f\|_{L^q_tL^r_x} \lesssim \big(\sup_k 2^{kd(\frac 12 -
\frac 1p)} \|\widehat{P_kf}\|_p\big)^\theta \|f\|_2^{1-\theta}.
\end{align}
\end{prop}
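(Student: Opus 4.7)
I would follow the standard bilinear-plus-Hausdorff--Young template for refined Strichartz estimates, adapted to the radial fractional Schr\"odinger setting via Lemma~\ref{bilinear-est}. The aim is to extract the Morrey-type quantity
\[
M \;=\; \sup_k 2^{kd(\frac{1}{2}-\frac{1}{p})}\|\widehat{P_kf}\|_p,
\]
which by H\"older on the annulus $A_k$ is dominated by $\|f\|_2$ but is strictly smaller when the Fourier mass of $f$ is spread out across dyadic annuli. The refinement \eqref{ref-str} then quantifies this strictness at the level of the Strichartz norm.

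First I would Littlewood--Paley decompose $U(\cdot)f=\sum_j U(\cdot)P_jf$ and, since $q,r>2$, expand the squared Strichartz norm as
\[
\|U(\cdot)f\|_{L^q_tL^r_x}^{2} \;\le\; \sum_{j,k}\|U(\cdot)P_jf\,U(\cdot)P_kf\|_{L^{q/2}_tL^{r/2}_x}.
\]
For admissible $(q,r)$ the weight $\beta(\alpha,q,r)$ in Lemma~\ref{bilinear-est} vanishes; the strict inequality $\frac{1}{q}<\frac{2d-1}{2}(\frac{1}{2}-\frac{1}{r})$ needed there is implied by $q>2$, $r<\infty$, and $\alpha>2d/(2d-1)$. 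So Lemma~\ref{bilinear-est} yields the off-diagonal gain
\[
\|U(\cdot)P_jf\,U(\cdot)P_kf\|_{L^{q/2}_tL^{r/2}_x} \;\lesssim\; 2^{-|j-k|\epsilon}\|P_jf\|_2\|P_kf\|_2.
\]

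Next I would extract $M$ via a Hausdorff--Young pointwise bound: $\|U(t)P_jf\|_{L^\infty_x}\le\|\widehat{P_jf}\|_1$, and H\"older on $A_j$ gives $\|\widehat{P_jf}\|_1\lesssim 2^{jd(1-1/p)}\|\widehat{P_jf}\|_p$, so the definition of $M$ yields $\|U(\cdot)P_jf\|_{L^\infty_{t,x}}\lesssim M\cdot 2^{jd/2}$. Pairing this pointwise bound with a companion Strichartz estimate for the other factor (via H\"older in the $L^{q/2}_tL^{r/2}_x$ norm, using Lemma~\ref{str-radial} at a non-admissible companion pair with a negative $\beta$-weight) gives a second bilinear bound in which a power of $M$ replaces one of the $\|P\|_2$ factors, at the cost of a different geometric weight in $|j-k|$.

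Finally, taking the geometric mean $\min(A,B)\le A^\eta B^{1-\eta}$ of the two bounds with weight $\eta\in(0,1)$ gives a schematic control of the form $M^{1-\eta}\cdot 2^{-|j-k|\kappa}\|P_jf\|_2^{\eta}\|P_kf\|_2$ with $\kappa>0$ (for $j\le k$, the other case by symmetry). Summing by a Schur/Cauchy--Schwarz estimate against the geometric-decay kernel and invoking Littlewood--Paley orthogonality $\sum_k\|P_kf\|_2^2=\|f\|_2^2$ would close with $\|U(\cdot)f\|_{L^q_tL^r_x}^{2}\lesssim M^{1-\eta}\|f\|_2^{1+\eta}$, which is \eqref{ref-str} with $\theta=(1-\eta)/2\in(0,\frac{1}{2})$. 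The hard part will be closing this summation: the geometric mean produces a sub-$\ell^2$ power $\|P_jf\|_2^{\eta}$ with $\eta<1$, so absorbing it against the $\ell^2$-summable sequence $(\|P_kf\|_2)_k$ is delicate and forces $\eta$ close to $1$ (so $\theta$ is small but positive); one must also verify that the auxiliary Strichartz pair used in the Hausdorff--Young step lies in the valid range of Lemma~\ref{str-radial}, which is where the choice of $p\in[1,2)$ enters.
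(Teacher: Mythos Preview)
Your ingredients match the paper's --- the bilinear gain from Lemma~\ref{bilinear-est} and a Hausdorff--Young/$L^\infty$ bound to see $M$ --- but the summation step does not close as written. After the geometric mean you need, with $a_k:=\|P_kf\|_2$,
\[
\sum_{j\le k} 2^{-(k-j)\kappa}\,a_j^{\eta}\,a_k \;\lesssim\; \|a\|_{\ell^2}^{\,1+\eta},
\]
and this is false for every $\eta<1$: take $a_j=N^{-1/2}$ for $1\le j\le N$ (so $\|a\|_{\ell^2}=1$); the diagonal alone contributes $\sum_j a_j^{1+\eta}=N^{(1-\eta)/2}\to\infty$. Schur against the geometric kernel only reduces matters to this diagonal sum, so pushing $\eta$ toward~$1$ does not help. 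The difficulty is structural: interpolating at the level of individual bilinear pieces and \emph{then} summing discards the orthogonality you need.

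The paper reverses the order. It first sums, producing two sequence-space Strichartz bounds: an outer-$\ell^{q_*}$ bound with $q_*>2$ (obtained from Lemma~\ref{bilinear-est} via an $L^4_{t,x}$ estimate and interpolation with Lemma~\ref{str-radial}) and an inner-$L^{p_*}$ bound with $p_*<2$ (your Hausdorff--Young step applied to each $P_kf$, then the Littlewood--Paley square function). Interpolating these with the basic $\ell^2$-Strichartz in the sequence indices gives
\[
\|U(\cdot)f\|_{L^q_tL^r_x}\lesssim\Big(\sum_k\big(2^{kd(\frac12-\frac1{p_0})}\|\widehat{P_kf}\|_{p_0}\big)^{q_0}\Big)^{1/q_0},\qquad p_0<2<q_0.
\]
Now the sup extraction is immediate: $\|c\|_{\ell^{q_0}}\le\|c\|_{\ell^\infty}^{1-2/q_0}\|c\|_{\ell^2}^{2/q_0}$, and H\"older on each annulus (using $p_0<2$) gives $c_k\lesssim\|P_kf\|_2$, hence $\|c\|_{\ell^2}\le\|f\|_2$. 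The point you are missing is that an outer exponent strictly larger than~$2$ must be in place \emph{before} one pulls out the supremum; your bilinear expansion implicitly places the outer sum in~$\ell^1$, which is why the extraction fails.
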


\begin{proof}
We have from Lemma \ref{str-radial}
\begin{align}\label{ref-1}
\|U(\cdot)f\|_{L^q_tL^r_x} \lesssim \Big(\sum_k
\|\widehat{P_kf}\|_2^2\Big)^{1/2}
\end{align}
for any $\al$-admissible pair $(q, r)$. Then \eqref{ref-str} follows
from interpolation of \eqref{ref-1} and the following two estimates:
for some $p_*,q_*$ with $p_* < 2 < q_*$,
\begin{align}
\|U(\cdot)f\|_{L^q_tL^r_x} &\lesssim \Big(\sum_k \| \widehat{P_kf} \|_2^{q_*}\Big)^{1/{q_*}},\label{ref-2}\\
\|U(\cdot)f\|_{L^q_tL^r_x} &\lesssim \Big(\sum_k \big(2^{kd(\frac 12
- \frac 1{p_*}} \| \widehat{P_kf}
\|_{p_*}\big)^2\,\Big)^{1/2}.\label{ref-3}
\end{align}
In fact, the interpolation among \eqref{ref-1}, \eqref{ref-2} and
\eqref{ref-3} gives
\begin{equation}\label{intpol}
\|U(\cdot)f\|_{L^q_tL^r_x} \lesssim \Big(\sum_k\big(2^{kd(\frac 12 -
\frac 1{p_0})} \| \widehat{P_kf} \|_{p_0}\big)^{q_0}\Big)^{ 1/{q_0}}
\end{equation} for $(1/{q_0},1/{p_0})$ on the triangle
with the vertices $(1/2,1/2)$, $(1/{p_*}, 1/2)$ and $(1/2,
1/{q_*})$. So, there exist $q_0,p_0$, $p_0 < 2 < q_0$, for which
\eqref{intpol} holds. Hence,
\begin{align*}
\|U(\cdot)f\|_{L^q_tL^r_x} &\leq \Big(\Big(\sup_k 2^{kd(\frac 12 -
\frac 1{p_0})} \|\widehat{P_kf}\|_{p_0}\Big)^{q_0-2} \sum_k
\big(2^{kd(\frac 12 - \frac 1{p_0})} \| \widehat{P_kf}
\|_{p_0}\big)^{2}\Big)^{1/{q_0}}
\\
&\leq \Big(\sup_k 2^{kd(\frac 12 - 1/{p_0})}
\|\widehat{P_kf}\|_{p_0}\Big)^{(q_0-2)/q_0}\Big(
\sum_k \| \widehat{P_kf} \|_{2}^{2}\Big)^{1/{q_0}}\\
&\leq \Big(\sup_k 2^{kd(\frac 12 - \frac 1{p_0})}
\|\widehat{P_kf}\|_{p_0}\Big)^{(q_0-2)/q_0} \|f\|_2^{2/q_0}.
\end{align*}
For the second inequality we used H\"older's inequality. We need
only to set $p=p_0$ and $\theta=1-2/q_0$ to get \eqref{ref-str}. Now
we need to show \eqref{ref-2} and  \eqref{ref-3}.

We show \eqref{ref-3} first.  Let $(q,r)$, $2<q<\infty$ be
$\alpha$-admissible. Since $\frac {2d}{2d-1} < \al$, there exist $2<
q_0,r_0<\infty$ such that $\frac n2 - \frac n{r_0} - \frac \al{q_0}
> 0$,  $\frac1{q_0} \le \frac{2d-1}{2}(\frac12 - \frac1{r_0})$, and
$(\frac1q,\frac1r)=\theta(\frac1{q_0},\frac1{r_0})$, $0<\theta<1$.
So, by \eqref{basic} we have $ \|U(\cdot) P_0f
\|_{L^{{q_0}}_tL^{{r_0}}_x} \lesssim \|\widehat{P_0f}\|_2. $ By
interpolation of this with the trivial estimate $\|U(\cdot)
P_0f\|_{L^\infty_{t,x}} \lesssim \|\widehat{P_0f}\|_1$, we get  for
some $1<p_*<2$, $\|U(\cdot) P_0f\|_{L^q_tL^r_x} \lesssim
\|\widehat{P_0f}\|_{p_*}.$ Then by rescaling, we have $\|U(\cdot)
P_kf\|_{L^q_tL^r_x} \lesssim 2^{kd(\frac 12 - \frac
1{p_*})}\|\widehat{P_0f}\|_{p_*}.$ Now Littlewood-Paley and
Minkowski inequalities give
\begin{align*}
\|U(\cdot)f\|_{L^q_tL^r_x} \le \Big(\sum_k \|U(\cdot)
P_kf\|_{L^q_tL^r_x}^2\Big)^{1/2} \lesssim \Big(\sum_k
\big(2^{kd(\frac 12 - \frac
1{p_*})}\|\widehat{P_kf}\|_{p_*}\big)^2\Big)^{1/2}.
\end{align*}

We now turn to the proof of \eqref{ref-2}. It is sufficient to show
an $L_{t,x}^4$ estimate
\begin{align}\label{l4}\|U(\cdot)f\|_{L^4_{t,x}} \lesssim \Big(\sum_k(2^{k\beta(\al, 4,4)}
\|\widehat{P_kf}\|_2)^4\Big)^{\frac 14}.\end{align} Indeed, as
before, the required estimates can be obtained by interpolating
\eqref{l4} with the estimates in Lemma
\ref{str-radial} for $(q, r) \neq (2, \frac{2(2d-1)}{2d-3})$
which satisfy $\frac1\al(\frac12-\frac1r) < \frac1q \le
\frac{2d-1}2\Big(\frac12-\frac1r\Big)$, $2 \le q, r \le \infty$.
To show \eqref{l4} we write
\begin{align*}
U(t)f\,U(t)f= \sum_{j =-\infty}^\infty \sum_k
U(t)P_kf\,U(t)P_{j+k}f.
\end{align*}
For \eqref{l4} it is sufficient to show that  for some $\epsilon>0$
\begin{align}\label{l2-half}
\|\sum_k U(\cdot) P_kf\, U(\cdot) P_{j+k}f\|_{L^2_{t,x}} \lesssim
2^{-|j|\epsilon} \Big(\sum_k \big(2^{k\beta(\al,
4,4)}\|\widehat{P_kf}\|_2\big)^4\Big)^{1/4}.
\end{align}

We show it by considering the cases $|j|\le 3$ and  $|j| > 3$,
separately. Let us first consider the case $|j|\le 3$. By
Cauchy-Schwartz inequality, $ |\sum_k U(t) P_kf \,U(t) P_{j+k}f|^2
\leq \sum_{l=-\infty}^\infty \sum_k |U(t)P_kf\,U(t)P_{k+l}f|^2. $
So,
\[  \|\sum_k U(t)P_k f\, U(t)P_{j+k}f\|_{L^2_{t,x}}^2
 \le \sum_{l=-\infty}^\infty \sum_k \| U(t)P_k f\,
 U(t)P_{k+l}f\|_{L^2_{t,x}}.\]
From Lemma \ref{bilinear-est} and Cauchy-Schwartz inequality, it
follows that
\begin{align*}
  \|\sum_k U(t)P_k f\,& U(t)P_{j+k}f\|_{L^2_{t,x}}^2
  \lesssim \sum_{l=-\infty}^\infty\sum_k 2^{-\epsilon
|l|}2^{2k\beta(\al,4,4)}
 \|P_kf\|_2^2 \,2^{2(k+l)\beta(\al,4,4)}\|P_{k+l}f\|_2^2 \\
&\leq \sum_{l=-\infty}^\infty 2^{-\epsilon |l|}\sum_k
\Big(2^{k\beta(\al,4,4)}\|P_kf\|_2^4\Big)^4\lesssim \sum_k
\Big(2^{k\beta(\al,4,4)}\|P_kf\|_2^4\Big)^4
\end{align*}
for $|j|\le 3$. We now consider the case $|j| > 3$. Let us first
observe that the Fourier supports of $U(t)P_kf\,U(t)P_{k+j}f$  are
boundedly overlapping. So by Plancherel's theorem and Lemma
\ref{bilinear-est}
\begin{align*}
&\qquad\qquad\|\sum_k U(\cdot) P_kf\, U(\cdot)P_{k+j}f
\|_{L^2_{t,x}}^2
\lesssim \sum_k \|U(\cdot)P_kf\, U(\cdot)P_{k+j}f \|_{L^2_{t,x}}^2 \lesssim \\
&\sum_k
2^{-|j|\epsilon}2^{k\beta(\al,4,4)}\|P_kf\|_2^22^{(k+l)\beta(\al,4,4)}\|P_{k+l}f\|_2^2
\lesssim 2^{-|j|\epsilon} \sum_k (2^{k\beta(\al,4,4)}\|P_kf\|_2)^4.
\end{align*}
This completes the proof.
\end{proof}

\section{Linear profile decomposition}
In this section we prove Theorem \ref{main}. We assume that the pair
$(q ,r)$ is $\alpha$-admissible with $\frac{d}2(\frac12 - \frac1r) <
\frac1q < \frac{2n-1}{2}(\frac12-\frac1r)$, that is,
$\frac{2d}{2d-1} < \alpha < 2$.

\subsection{Preliminary decomposition}
By using the refined Strichartz estimate \eqref{ref-str}, we extract
frequencies and scaling parameters  to get a preliminary
decomposition as follows.

\begin{prop}\label{prop-decom1}
Let $(u_n)_{n\geq1}$ be a sequence of complex valued functions with
$\|u_n\|_{L^2} \leq 1$. Then for any $\delta > 0$, there exists $N =
N(\delta)$, $\rho^j_n \in (0,\infty)$ and  $(f^j_n)_{1 \leq j \leq
N, n \geq 1} \subset L^2$ such that
$$u_n = \sum^N_{j=1} f_n^j + q^N_n$$
with the following properties:
\begin{enumerate}
\item[$1)$] there exist compact set $K = K(N)$ in an annulus $\{\xi : R_1 < |\xi| < R_2\}$
satisfying that
$$(\rho^j_n)^{d/2}|\widehat{f^j_n}(\rho^j_n \xi)| \leq C_\delta \chi_K(\xi)\;\;\mbox{for every}\;\; 1 \leq j \leq N,$$
\item[$2)$] $\limsup_{n \rightarrow \infty}(\frac{\rho^j_n}{\rho^k_n}+\frac{\rho^k_n}{\rho^j_n}) =
 \infty,\;\;\mbox{ if}\;\; j \neq
k,$
\item[$3)$] $\limsup_{n \rightarrow \infty} \|U(\cdot)q^N_n\|_{L_t^qL_x^r} \leq \delta \;\;\mbox{for any}\;\;N \geq
1,$
\item[$4)$] $\limsup_{n \rightarrow \infty}(\|u_n\|_2^2 - (\sum_{j=1}^N\|f^j_n\|^2_2 + \|q^N_n\|^2_2)) = 0.$
\end{enumerate}
\end{prop}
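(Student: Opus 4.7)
The proof is by iterative extraction of concentrated frequency blocks driven by the refined Strichartz inequality of Proposition~\ref{ref-str-prop}. Let $p \in [1,2)$ and $\theta \in (0,1)$ be the exponents furnished there, and fix $\delta>0$. Set $q_n^0:=u_n$ and iterate as follows: at step $j\ge 1$, if $\limsup_n \|U(\cdot)q_n^{j-1}\|_{L^q_t L^r_x}\le\delta$ we stop and set $N:=j-1$; otherwise, after passing to a subsequence, Proposition~\ref{ref-str-prop} combined with $\|q_n^{j-1}\|_2\le 1$ produces an integer $k_n^j$ with $2^{k_n^j d(\frac12-\frac1p)}\|\widehat{P_{k_n^j} q_n^{j-1}}\|_p \gtrsim \delta^{1/\theta}$. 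Set $\rho_n^j:=2^{k_n^j}$ and define $f_n^j$ by truncating the Fourier transform of $q_n^{j-1}$ both in frequency support and in amplitude,
$$ \widehat{f_n^j}(\eta)\;:=\;\widehat{q_n^{j-1}}(\eta)\,\mathbf{1}_{A_{k_n^j}}(\eta)\,\mathbf{1}_{\{|\widehat{q_n^{j-1}}(\eta)|\,\le\, C_\delta\,(\rho_n^j)^{-d/2}\}},\qquad q_n^j:=q_n^{j-1}-f_n^j. $$
Then by the choice of $\rho_n^j$, $(\rho_n^j)^{d/2}|\widehat{f_n^j}(\rho_n^j\xi)|\le C_\delta\,\chi_{\overline{A_0}}(\xi)$, yielding Property~(1) with the fixed compact annulus $K_0=\{1/2\le|\xi|\le 2\}$. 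Because $\widehat{f_n^j}$ and $\widehat{q_n^j}$ have disjoint Fourier supports, $\|q_n^{j-1}\|_2^2=\|f_n^j\|_2^2+\|q_n^j\|_2^2$, and a telescoping argument in $j$ gives the mass identity of Property~(4).

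The quantitative crux is a uniform lower bound $\|f_n^j\|_2\gtrsim c(\delta)>0$. Define the rescaled profile $F_n^j(\eta):=(\rho_n^j)^{d/2}\widehat{q_n^{j-1}}(\rho_n^j\eta)\mathbf{1}_{A_0}(\eta)$, so that $\mathrm{supp}\,F_n^j\subset\overline{A_0}$, $\|F_n^j\|_2\lesssim 1$, and a change of variables turns the refined-Strichartz lower bound into $\|F_n^j\|_p\gtrsim\delta^{1/\theta}$. Splitting at amplitude $C_\delta$ and exploiting $p<2$,
$$ \bigl\|F_n^j\,\mathbf{1}_{\{|F_n^j|>C_\delta\}}\bigr\|_p^p\;\le\;C_\delta^{\,p-2}\,\|F_n^j\|_2^2\;\lesssim\;C_\delta^{\,p-2}, $$
and choosing $C_\delta$ sufficiently large (depending only on $\delta$) makes this $\ll\delta^{p/\theta}$, whence $\|F_n^j\mathbf{1}_{\{|F_n^j|\le C_\delta\}}\|_p\gtrsim\delta^{1/\theta}$. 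Since $\overline{A_0}$ has bounded measure, H\"older's inequality upgrades this to $\|F_n^j\mathbf{1}_{\{|F_n^j|\le C_\delta\}}\|_2\gtrsim \delta^{1/\theta}=:c(\delta)$, equivalently $\|f_n^j\|_2\gtrsim c(\delta)$. Combined with the telescoped mass identity, $N\,c(\delta)^2\le\|u_n\|_2^2\le 1$, so the iteration terminates at some $N=N(\delta)\le c(\delta)^{-2}$, uniformly in the sequence; Property~(3) is then automatic from the stopping rule.

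Property~(2) is arranged by a final post-processing step: pass to a diagonal subsequence along which every ratio $\rho_n^j/\rho_n^k$ converges in $[0,\infty]$, declare $j\sim k$ precisely when this limit lies in $(0,\infty)$, and replace the profiles within each equivalence class by their sum. Within one class the dyadic exponents $k_n^j$ differ by bounded amounts (for large $n$), so the Fourier supports lie in a bounded union of dyadic annuli of comparable scale and the amalgamated profile still satisfies Property~(1), now with an enlarged but $N$-dependent compact $K=K(N)$ and constant $C_\delta$; distinct classes then realize the asymptotic scale-orthogonality condition. The most delicate step in the whole scheme is the distribution-function argument in the preceding paragraph producing $\|f_n^j\|_2\gtrsim c(\delta)$: it is precisely the gap $p<2$ in the refined Strichartz estimate that allows converting an $L^p$-concentration bound into an $L^2$-mass lower bound via amplitude truncation, and thereby bounding $N$ by a quantity depending only on $\delta$.
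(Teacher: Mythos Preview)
Your proof is correct and follows essentially the same route as the paper's: iterated extraction of a frequency annulus via the refined Strichartz estimate (Proposition~\ref{ref-str-prop}), amplitude truncation at level $\sim C_\delta(\rho_n^j)^{-d/2}$, the $p<2$ distribution-function argument to convert the $L^p$ concentration into an $L^2$ mass lower bound $\|f_n^j\|_2\gtrsim c(\delta)$ (hence a uniform bound on $N$), Fourier-support disjointness for the mass identity, and a final regrouping into equivalence classes of comparable scales to secure the asymptotic orthogonality~(2) while enlarging $K$ to $K(N)$. The only cosmetic differences are your use of the Littlewood--Paley notation $P_k$, $A_k$ in place of the paper's sharp dyadic annuli, and that you make the pairwise Fourier-support disjointness of the $f_n^j$ explicit from the outset rather than noting it in a footnote.
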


\begin{proof}
Suppose that $\limsup_{n \to
\infty}\|U(\cdot)u_n\|_{L_t^qL_x^r} \le \delta$, then there is
nothing to prove.  So we assume that$ \|U(\cdot ) u_n
\|_{L_t^qL_x^r} > \delta$ for all $n \ge 1$. By refined Strichartz
estimates (Lemma \ref{ref-str-prop}), there exists $A^1_n = \{\xi :
\rho^1_n/2 < |\xi| < \rho^1_n \}$ such that
$$c_1(\rho^1_n)^{d(\frac 1p - \frac 12)}\delta^{\frac 1\theta} \leq
\|\widehat{u_n^1}\|_p\;\;\mbox{for some constant}\;\;c_1,$$ where
$\widehat{u_n^1} = \widehat{u_n}\chi_{A_n^1}$. And for any $\lambda
> 0$
\begin{align*}
\int_{\{|\widehat{u_n^1}| > \lambda \}} |\widehat{u_n^1}|^p d\xi &=
\int_{\{|\widehat{u_n^1}| > \lambda\}}
(\lambda^{2-p}|\widehat{u_n^1}|^p)\lambda^{p-2}d\xi \le
\lambda^{p-2}.
\end{align*}
Thus we have
$$
\Big(\int_{\{|\widehat{u_n^1}| > \lambda\}} |\widehat{u_n^1}|^p d\xi
\Big)^{\frac 1p} \le \lambda^{1-\frac 2p}.
$$

Let $\lambda = (\frac{c_1}2)^{\frac p{2-p}}(\rho^1_n)^{-\frac d2}
\delta^{\frac 1\theta \cdot \frac p{p-2}}$. Then
\begin{align*}
\frac{c_1}2(\rho^1_n)^{d(\frac 1p - \frac 12)}\delta^{\frac 1\theta}
&\leq \Big(\int_{\{ |\widehat{u^1_n}| < \lambda\}}
|\widehat{u^1_n}|^p\Big)^{\frac 1p} \leq
(\omega_d^\frac1d\rho^1_n)^{d(\frac 1p - \frac
12)}\Big(\int_{\{|\widehat{u^1_n}| < \lambda\}}
|\widehat{u^1_n}|^2\Big)^{\frac 12},
\end{align*}
where $\omega_d$ is the measure of unit sphere, which implies that
$$
\frac {c_1'}2\delta^{\frac 1\theta} \leq
\Big(\int_{\{|\widehat{u^1_n}| < \lambda\}}
|\widehat{u^1_n}|^2\Big)^{\frac 12}\qquad (c_1' =
c_1\omega_d^{1/2-1/p}).
$$

Now define $G_n^1(\psi)(\xi)$ by $(\rho_n^1)^{d/2}
\psi(\rho_n^1\xi)$ for measurable function $\psi$. Then by letting
$\widehat{v^1_n} = \widehat{u^1_n}\; \chi_{\{|\widehat{u^1_n}| <
\lambda\}}$ we get $\|v^1_n\|_2 \geq \frac 12 c_1'\delta^{\frac
1\theta}$ and $|G_n^1(\widehat{v^1_n}(\xi))| = (\rho^1_n)^{\frac d2}
\widehat{v^1_n}(\rho^1_n \xi) \leq C_\delta \chi_{A_{1/2, 1}}(\xi)$,
where $A_{R_1, R_2}$ is the annulus $\{\xi : R_1 < |\xi| < R_2\}$.
We can repeat above progress with $u_n - v^1_n$ replacing $u_n$.
After $N ( = N(\delta))$ steps\footnote{At each step, the $L^2$ norm
decreases by at least $\frac12 c_1'\delta^{\frac 1\theta}$.}, we get
$(v^j_n)_{1 \leq j \leq N}$ and $(\rho^j_n)$ such that
\begin{align*}
u_n &= \sum_{j=1}^N v^j_n + q^N_n,\\
\|u_n\|^2_2 &= \sum_{j=1}^N \|v^i_n\|^2_2 + \|q^N_n\|^2_2,\\
\|&U(t)q^N_n\|_{L^r_{t,x}} \leq \delta.
\end{align*}
The second identity follows from disjointness of
\footnote{Actually, we can make them mutually disjoint at each
step.} Fourier supports of $v_n^j$ and $q_n^N$. The third inequality
gives Property 3).

We say $\rho^j_n \perp \rho^k_n$ if and only if $\limsup (\frac
{\rho^k_n}{\rho^j_n} + \frac {\rho^j_n}{\rho^k_n}) = \infty$. Define
$f^1_n$ to be the sum of those $v^j_n$ whose $\rho^j_n$ are not
orthogonal to $\rho^1_n$. Take least $j_0 \in [2,N]$ such that
$\rho^{j_0}_n$ is orthogonal to $\rho^1_n$ and define $f^2_n$ to
be the sum of $v^j_n$ whose $\rho^j_n$ are orthogonal to $\rho_n^1$
but not to $\rho_n^{j_0}$. After finite step, we have $(f^j_n)_{1
\leq j \leq N}$ satisfying properties 2) and 4) because the
Fourier supports are disjoint.

Now we have only to check Property 1). We only consider $f_n^1$. The other cases can be treated similarly.
Since $v^j_n$ collected in $f^1_n$ has $\rho^j_n$ which is not
orthogonal to $\rho^1_n$, we have
\begin{align}\label{non-ortho}
\limsup_{n \rightarrow \infty} \left(\frac {\rho^j_n}{\rho^1_n} +
\frac {\rho^1_n}{\rho^k_n}\right) < \infty. \end{align} And by
construction, we also have $|G^j_n(\widehat{v^j_n})| \leq
C_\delta \chi_{A_{1/2,1}}$. Here $G^j_n(\psi)(\xi) = (\rho_n^j)^\frac d2 \psi(\rho_n^j \xi)$. Since
$G^1_n(\widehat{v^j_n}) = G^1_n(G^j_n)^{-1}G^j_n(\widehat{v^j_n})$
and
$$G^1_n(G^j_n)^{-1}\psi(\xi) = \left(\frac
{\rho^1_n}{\rho^j_n}\right)^{\frac d2}\psi\left(\frac
{\rho^1_n}{\rho^j_n} \xi\right),$$ from the non-orthogonality
\eqref{non-ortho} it follows that there exist $R_1$ and $R_2$ with
$0 < R_1 < R_2$ such that $|G^1_n(\widehat{v^j_n})| \leq
\widetilde{C_\delta} \chi_{A_{R_1, R_2}}$ for all $v_n^j$ collected
in $f_n^1$. This completes the proof of Proposition
\ref{prop-decom1}.
\end{proof}

The next step is devoted to further decomposition of $f_n^j$ to get
time parameters.
\begin{prop}\label{further-decomp}
Suppose that $\{f_n\} \subset L^2$ satisfies
$(\rho_n)^{d/2}|\widehat{f_n}(\rho_n\xi)| \leq \widehat{F}(\xi)$
and $\widehat{F} \in L^\infty(K)$ for some compact set $K
\subset A = \{\xi : 0 < R_1 < |\xi| < R_2\}$. Then there exist
a family $(s^\ell_n)_{\ell \geq 1} \subset \mathbb{R}$ and a
sequence $(\phi^\ell)_{\ell \geq 1} \subset L^2$ satisfying the
following properties:
\begin{enumerate}
\item[$1)$] for $\ell \neq \ell'$
$$\limsup_{n \rightarrow \infty} |s^\ell_n - s^{\ell'}_n| = \infty,$$
\item[$2)$]
for every
$M \geq 1$, there exists $e^M_n \in L^2$ such that $$f_n(x) =
\sum_{\ell=1}^M (\rho_n)^{d/2}(U(s^\ell_n) \phi^\ell)(\rho_n
x) + e^M_n(x)$$ and $$\limsup_{\substack{ M \to \infty \\ n\to \infty  }}  \|U(\cdot)
e^M_n\|_{L_t^qL_x^r} = 0,$$
\item[$3)$]
for any $M \geq 1$,
$$\limsup_{n \rightarrow \infty}\left(\|f_n\|^2 - (\sum_{\ell=1}^M
\|\phi^\ell\|_2^2 + \|e^M_n\|_2^2)\right) = 0.$$
\end{enumerate}
\end{prop}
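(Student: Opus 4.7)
Set $g_n(x)=\rho_n^{-d/2} f_n(x/\rho_n)$, so $|\widehat{g_n}|\le \widehat F$ on the fixed compact annulus $K\subset A$. Since $(q,r)$ is $\alpha$-admissible, the scaling symmetry gives $\|U(\cdot)f_n\|_{L^q_tL^r_x}=\|U(\cdot)g_n\|_{L^q_tL^r_x}$, so it suffices to construct the decomposition at the level of $g_n$ and dilate back via $\phi\mapsto \rho_n^{d/2}\phi(\rho_n\cdot)$. Radiality is inherited from the parent setting of Theorem~\ref{main}.

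\textbf{One-profile extraction.} The core claim is: if $\limsup_n\|U(\cdot)g_n\|_{L^q_tL^r_x}\ge\delta>0$, then along a subsequence there are $s_n\in\mathbb{R}$ and $\phi\in L^2$ with $\|\phi\|_{L^2}\ge\eta(\delta)>0$ such that $U(s_n)g_n\rightharpoonup\phi$ weakly in $L^2$. From the pointwise bound $\|U(t)g_n\|_{L^r_x}^r\le\|U(t)g_n\|_{L^\infty_x}^{r-2}\|g_n\|_{L^2}^2$ and integration in $t$ one obtains $\|U(\cdot)g_n\|_{L^{\tilde q}_tL^\infty_x}\gtrsim\delta^{q/\tilde q}$ with $\tilde q=q(r-2)/r<q$. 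On the other hand, pick an $\alpha$-admissible pair $(p,r_0)$ with $p<\tilde q$; the hypothesis $\alpha>2d/(2d-1)$ is precisely what ensures such a pair sits inside the extended radial Strichartz range, so Lemma~\ref{str-radial} combined with Bernstein on the fixed compact Fourier support gives $\|U(\cdot)g_n\|_{L^p_tL^\infty_x}\lesssim_K\|U(\cdot)g_n\|_{L^p_tL^{r_0}_x}\lesssim 1$. H\"older interpolation in $t$ applied to $t\mapsto\|U(t)g_n\|_{L^\infty_x}$ then forces $\|U(\cdot)g_n\|_{L^\infty_{t,x}}\ge\eta(\delta)$, so one picks $s_n,x_n$ with $|U(s_n)g_n(x_n)|\ge\eta(\delta)/2$. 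Radiality together with Fourier support in $K$ yields, via the standard Bessel-function representation for radial data, the uniform pointwise decay $|U(s)g_n(x)|\lesssim_F(1+|x|)^{-(d-1)/2}$, so $(x_n)$ is bounded and $x_n\to x_*$ subsequentially. The family $\{U(s_n)g_n\}$ is bounded in $L^2$ with Fourier support in the fixed $K$, hence equicontinuous; extracting weak $L^2$-limit $\phi$ and (further) locally uniform limit, the two limits agree, and $|\phi(x_*)|\ge\eta(\delta)/2$ forces $\phi\ne 0$ with $\|\phi\|_{L^2}\gtrsim_K\eta(\delta)$ (Bernstein again).

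\textbf{Iteration, orthogonality, termination.} Put $g^{(1)}_n=g_n-U(-s^1_n)\phi^1$; since $U(s^1_n)g^{(1)}_n\rightharpoonup 0$, the Pythagorean identity $\|g_n\|_{L^2}^2=\|\phi^1\|_{L^2}^2+\|g^{(1)}_n\|_{L^2}^2+o(1)$ holds, which yields property (3). The bound $|\widehat{g^{(1)}_n}|\le 2\widehat F$ on $K$ lets the extraction lemma re-apply. Asymptotic orthogonality of times is automatic: if $s^2_n-s^1_n\to s_\infty\in\mathbb{R}$ along a subsequence, then the weak limit of $U(s^2_n)g^{(1)}_n=U(s^2_n-s^1_n)(U(s^1_n)g_n-\phi^1)$ equals $U(s_\infty)(\phi^1-\phi^1)=0$, contradicting $\phi^2\ne 0$. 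At a fixed threshold $\delta$ the $L^2$-budget caps the number of extractable profiles by $O(\eta(\delta)^{-2})$, so the remainder obeys $\limsup_n\|U(\cdot)e^M_n\|_{L^q_tL^r_x}\le\delta$; running $\delta=\delta_k\to 0$ and a standard diagonal extraction delivers property (2). The principal obstacle throughout is the concentration step: namely, using the extended radial Strichartz range of Lemma~\ref{str-radial} (which is where $\alpha>2d/(2d-1)$ enters) to interpolate from the $L^q_tL^r_x$ lower bound down to a quantitative $L^\infty_{t,x}$ lower bound, and then invoking the radial Bessel decay to confine the concentration point $x_n$ to a bounded set so that the weak limit is nontrivial.
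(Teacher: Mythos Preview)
Your overall architecture matches the paper's: rescale to the fixed annulus, show that a nontrivial Strichartz norm forces a nontrivial $L^\infty_{t,x}$ bound, use radiality plus compact Fourier support to pin down the spatial concentration point, extract a weak limit, iterate. The orthogonality and Pythagorean steps are fine, and your use of Bessel decay to bound $x_n$ is a legitimate variant of the paper's Lipschitz/$L^2$ argument.

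However, the interpolation you use to pass from the $L^q_tL^r_x$ lower bound to an $L^\infty_{t,x}$ lower bound does not work as written. You set $\tilde q=q(r-2)/r$ and then claim one can choose a Strichartz pair $(p,r_0)$ with $p<\tilde q$. But for any $\alpha$-admissible $(q,r)$ one has $q=\frac{2\alpha r}{d(r-2)}$, hence
\[
\tilde q=\frac{q(r-2)}{r}=\frac{2\alpha}{d}<2,
\]
since $\alpha<2\le d$. Every pair in the radial Strichartz range of Lemma~\ref{str-radial} has time exponent $\ge 2$, so there is no admissible $p<\tilde q$, and your H\"older interpolation in $t$ between $L^p_tL^\infty_x$ and $L^\infty_tL^\infty_x$ cannot be set up. The hypothesis $\alpha>\frac{2d}{2d-1}$ does not rescue this step; it only ensures that $(q,r)$ lies strictly inside the radial range, not that $\tilde q>2$.

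The paper avoids this by interpolating in space and time \emph{simultaneously}: it writes
\[
\|U(\cdot)F^M_n\|_{L^q_tL^r_x}\le \|U(\cdot)F^M_n\|_{L^{\tilde q}_tL^{\tilde r}_x}^{\tilde q/q}\,\|U(\cdot)F^M_n\|_{L^\infty_{t,x}}^{1-\tilde q/q}
\]
with $(\tilde q,\tilde r)$ chosen on the ray through the origin and $(1/q,1/r)$, slightly beyond $(q,r)$ but still in the radial Strichartz region; this is exactly where $\alpha>\frac{2d}{2d-1}$ is used, since it guarantees $(q,r)$ is strictly interior. The first factor is then bounded uniformly by Lemma~\ref{str-radial} (plus the compact Fourier support), and the desired $L^\infty_{t,x}$ lower bound follows. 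If you replace your two-stage (space, then time) interpolation by this joint one, the rest of your argument goes through essentially unchanged.
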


\begin{proof}
Let  us denote by $\mathcal F$ the collection of functions
$\{F_n\}_{n \geq 1}$ which are given by $\widehat{F_n}(\xi) =
(\rho_n)^{d/2}\widehat{f_n}(\rho_n \xi)$, and define
$$\mathcal{W}(\mathcal F) = \{\text{\small weak-lim}\ U(-s^1_n)F_n \text{ in } L^2 : s^1_n \in \mathbb{R} \}$$
and $\mu(\mathcal F) = \sup_{\phi \in \mathcal{W}(\mathcal F)} \|\phi\|_{L^2}$. Then $\mu(\mathcal F) \leq \limsup_{n \rightarrow \infty} \|F_n\|_{L^2}$.

We may assume that $\mu(\mathcal F) > 0$. As a matter of fact, if
$\mu(\mathcal F) = 0$, we are done because we will show later
\eqref{bound-mu} for some $\theta$,  $0 < \theta < 1$.

Let us choose a subsequence $\{F_n\}$, $s^1_n$ and $\phi^1$ such that $U(-s^1_n)F_n \rightharpoonup \phi^1$ as $n \rightarrow \infty$ and $\|\phi^1\| \geq \frac 12\mu(\mathcal F)$. Let $F^1_n = F_n - U(s^1_n)\phi^1$ and $\mathcal F^1 = \{F^1_n\}$.
Then
\begin{align*}
\limsup_{n \rightarrow \infty}&\|F^1_n\|^2_2\\ &= \limsup_{n \rightarrow \infty} \langle F_n - U(s^1_n)\phi^1,F_n - U(s^1_n)\phi^1\rangle\\
&= \limsup_{n \rightarrow \infty}\langle U(-s^1_n)F_n - \phi,U(-s^1_n)F_n - \phi^1\rangle\\
&= \limsup_{n \rightarrow \infty}\left(\langle F_n, F_n \rangle - \langle U(-s^1_n)F_n, \phi^1\rangle - \langle \phi^1,U(-s^1_n)F_n \rangle + \langle \phi^1,\phi^1 \rangle\right)\\
&= \limsup_{n \rightarrow \infty}\|F_n\|^2_2 - \|\phi^1\|^2_2.
\end{align*}
Repeat the process with $F^1_n$ to get $s_n^2, \phi^2,F^2_n$ and so on.
By taking a diagonal sequence we may write
$$F_n(x) = \sum_{\ell=1}^M U(s_n^\ell)\phi^\ell + F^M_n,$$
which satisfies that $\limsup_{n \rightarrow \infty} \|F_n\|_2^2 = \sum_{\ell=1}^M \|\phi^\ell\|_2^2 + \limsup_{n \rightarrow \infty} \|F_n^M\|_2^2$. So $\sum_{\ell=1}^M \|\phi^\ell\|_2^2$ is convergent, which implies $\limsup_{\ell \rightarrow \infty}\|\phi^\ell\|_2 = 0$. Since  $\mu(\mathcal F^M) \leq 2 \|\phi^{M+1}\|_2$, we get $\limsup_{M\rightarrow\infty} \mu(\mathcal F^M) = 0$.

Now let us define $e_n^M$ by $\widehat F_n^M = \rho_n^\frac d2 \widehat{e_n^M}$. Then the remaining thing is to show
\begin{align}\label{bound-mu}
\limsup_{n \rightarrow \infty} \| U(\cdot) e^M_n \|_{L_t^qL_x^r} \lesssim \mu(\mathcal F^M)^\theta
\end{align}
for some $\theta$ with $0 < \theta < 1$.  By construction, we may assume $\widehat{\phi^\ell}_{1\leq \ell \leq M}$ has common compact support $K$.
Invoking that the pair $(q, r)$ is $\alpha$-admissible with $\frac1q < \frac{2d-1}{2}(\frac12 - \frac1r)$, we get
\begin{align*}
\|U(\cdot)e^M_n\|_{L_t^qL_x^r} = \|U(\cdot)F^M_n \|_{L_t^qL_x^r} \leq \|U(\cdot)F^M_n \|_{L_t^{\widetilde q}L_x^{\widetilde r}}^{\widetilde q/q}\|U(\cdot)F^M_n
\|_{L^\infty_{t,x}}^{1-\widetilde q/q}
\end{align*}
for some $\frac{2d}{2d-1}$-admissible pair $(\widetilde q, \widetilde r)$ with $\frac{\widetilde q}{q} = \frac{\widetilde r}{r}$.
Concerning the first term, from Lemma \ref{str-radial} we have
$$
\|U(\cdot)F^M_n \|_{L_t^{\widetilde q}L_x^{\widetilde r}} \lesssim  R_1^{\frac d2 - \frac{2d}{\widetilde q (2d-a)} - \frac{d}{\widetilde r}} \|F_n^M\|_{L^2} \lesssim R_1^{\frac d2 - \frac{2d}{\widetilde q (2d-a)} - \frac{d}{\widetilde r}}.
$$
Thus for \eqref{bound-mu} it suffices to show $\limsup_{n
\rightarrow \infty} \|U(t)F_n^M\|_{L^\infty_{t,x}} \lesssim
\mu(\mathcal F^M)$. For this we may assume that there exists
$\delta
> 0$ such that $$\limsup_{\substack{ M \to \infty \\ n\to \infty  } }\|U(t)F^M_n\|_{L^\infty_{t,x}} >
\delta.$$ Let $(s^M_n, y^M_n)$ be such that $\|U(t)
F^M_n\|_{L^\infty_{t,x}} = |U(s^M_n)(F^M_n)(y^M_n)|$. Then we show
that $|y^M_n|$ is uniformly bounded.

Let us first observe that for any $x_0, x_1 \in \mathbb R^d$
\begin{align*}
&\qquad |U(t)F^M_n(x_1) - U(t)F^M_n(x_0)| \leq \sup_{x}|\nabla(U(t)F^M_n(x))||x_1 - x_0|\\
&\leq \int
|\xi||e^{it|\xi|^\alpha}\widehat{P^M_n}(\xi)|d\xi|x_1-x_0|
\lesssim (\int_0^{R_2} r^2\cdot r^{n-1} dr)^{\frac 12}\|F^M_n\|_{2}|x_1 - x_0|\\
&\lesssim R_2^{\frac {n+2}2}|x_1 - x_0|.
\end{align*}
From this, we deduce that $|U(s^M_n) F^M_n(y)| > \frac \delta2$ if $|y-y^M_n|
\leq c\frac \delta2$ for some small constant $c > 0$. Since $U(s^M_n)(F^M_n)(y)$ is radially
symmetric, $$|U(s^M_n)(F^M_n)(y)| > \frac \delta2\;\;\mbox{if}\;\; |y^M_n| -
c\frac \delta2 < |y| < |y^M_n| + c\frac \delta2.$$
Taking $L^2$ norm on $|y^M_n| -
c\frac \delta2 < |y| < |y^M_n| + c\frac \delta2$, we have $\frac \delta2 |y^M_n|^{n-1}  \frac \delta2 c \leq \|F^M_n\|_2 \leq 1$, which implies
$|y^M_n|$ is uniformly bounded. Since $|y^M_n|$ is uniformly bounded, there exists $y^M_0$ such that
$y^M_n \rightarrow y^M_0$ as $n \rightarrow \infty$ for some
subsequence. Then for large $n$, $|U(s^M_n)(F^M_n)(y^M_0)|
\geq \frac 12 |U(s^M_n)(F^M_n)(y^M_n)|$. Let $\psi \in
C^\infty_0(\mathbb{R}^d)$ be radially symmetric and such that $\psi
= 1$ on $K$. And let $\psi^M$ be Schwartz function such that $\widehat{\psi^M} =
\psi \widehat{\delta_{y^M_0}}$, where $\delta_{y_0^M}$ is Dirac-delta measure. Then
\begin{align*}
\limsup_{n \rightarrow \infty} \|U(t) F^M_n\|_{L^\infty_{t,x}} &\lesssim \limsup_{n \rightarrow \infty} |U(s^M_n)(F^M_n)(y^M_0)|\\
&= \limsup_{n \rightarrow \infty} |\int U(s^M_n)(F^M_n)(y)\psi^M(y)dy|\\
&\leq \|\psi^M\|_2\mu(\mathcal F^M) \lesssim \mu(\mathcal F^M).
\end{align*}
This completes the proof of Proposition \ref{further-decomp}.
\end{proof}

We now begin the proof of Theorem \ref{main}.

\subsection{Proof of Theorem \ref{main}}\label{s4}  Let us start with the
preliminary decomposition. From Propositions \ref{prop-decom1} and
\ref{further-decomp} we have
\begin{align}\label{pre-decomp}
u_n = \sum_{j = 1}^N \sum_{\ell = 1}^{M_j}\Phi_n^{\ell, j}  + \omega_n^{N,M_1,\cdots,M_N},
\end{align}
where \begin{align*}
&\qquad\qquad\qquad\Phi_n^{\ell, j} = U(t^{\ell,j}_n)[(h^j_n)^{-d/2}\phi^{\ell,j}(\cdot/h^j_n)],\\
&(h^j_n, t^{\ell,j}_n) = ((\rho^j_n)^{-1},(\rho^j_n)^{-\alpha}s^{\ell,j}_n),\quad \omega_n^{N,M_1,\cdots,M_N} = \sum_{j=1}^N e^{j, M_j}_n + q^N_n.
\end{align*}
Then the decomposition satisfies
\begin{enumerate}
\item by constructions, the family $(h^j_n,t^{\ell, j}_n)$ is pairwise orthogonal,
\item the asymptotic orthogonality is satisfied as follows:
\begin{align*}
\|u_n\|_2^2 
= \sum_{j=1}^N\sum_{\ell=1}^{M_j}\|\phi^{\ell,j}\|_2^2 + \|\omega^{N,M_1,\cdots,M_N}_n\|_2^2 + o_n(1)
\end{align*}
and $\|\omega^{N,M_1,\cdots,M_N}_n\|_2^2 = \sum_{j=1}^N \|e^{j,M_j}_n\|_2^2 + \|q^N_n\|_2^2 $ due to disjoint Fourier supports.
\end{enumerate}
We will show that
$U(t)\,\omega_n^{N,M_1,\cdots,M_N}$ converges to zero in a Strichartz norm, i.e.,
\begin{align}\label{str-err}\limsup_{n \rightarrow \infty} \|U(t)\, \omega_n^{N,M_1,\cdots,M_N}\|_{L_t^qL_x^r} \rightarrow 0 \text{ as }\min\{N, M_1,\cdots, M_N\} \rightarrow \infty, \end{align}
where $(q, r)$ is an $\alpha$-admissible pair with $\frac{2d}{2d-1} < \alpha < 2$.
We enumerate the pair $(j, \al)$ by $\upsilon$ satisfying
\begin{center}
$\upsilon(j,\al) < \upsilon(k,\beta)$ if $j + \al < k + \beta$ or $j + \al = k + \beta$ and $j<k$.
\end{center}
After relabeling,
$$u_n = \sum_{1 \leq j \leq l}U(t^j_n)[(h^j_n)^{-d/2}\phi^j(\cdot/h^j_n)] + \omega^l_n$$
where $\omega^j_n = u_M^{N,M_1,\cdots,M_n}$ with $l = \sum_{j=1}^N M_j.$ Then the proof is completed by \eqref{str-err}.

Now let us prove \eqref{str-err}. Given $\varepsilon > 0$, we take a positive number $\Lambda$ such that for every $N \ge \Lambda$,
$$\limsup_{n \rightarrow \infty} \|U(t)\, q^N_n \|_{L_t^qL_x^r} \leq \epsilon/3$$
Then for every $N \geq \Lambda$, we can find $\Lambda_N$ such that whenever $M_j \geq \Lambda_N$,
$$\limsup_{n \rightarrow \infty} \|U(t)\, e^{j,M_j}_n \|_{L_t^qL_x^r} \leq \epsilon/3N.$$
Now we rewrite $\omega_n^{N, M_1, \cdots, M_N}$ by
$$\omega_n^{N,M_1,\cdots,M_N} = q^M_n + \sum_{1 \leq j \leq N} e^{j, M_j \vee \Lambda_N}_n + R^{N, M_1, \cdots, M_n}_n,$$
where $M_j \vee \Lambda_N$ denotes $\max \{M_j, \Lambda_N\}$ and
\begin{align*}
R^{N,M_1,\cdots,M_N}_n = \sum_{1 \leq j \leq N} (e^{j,M_j}_n - e^{j,\Lambda_N}_n) = \sum_{\tiny \begin{array}{c} 1 \leq j \leq N\\ M_j < \Lambda_N\end{array}} \sum_{M_j < \ell < \Lambda_N}\Phi_n^{\ell, j}.
\end{align*}
Then we have
$$\lim_{n \rightarrow \infty} \|U(t) \omega_n^{N,M_1, \cdots, M_N}\|_{L_t^qL_x^r} \leq \frac {2\varepsilon}3 + \lim_{n \rightarrow \infty} \|U(t)R^{N,M_1,\cdots,M_N}_n\|_{L_t^qL_x^r}.$$
In order to handle last term, we need the following lemma which will be proved at the end of this section.
\begin{lem}\label{ortho}
For every $N, M_1, \cdots, M_N$, we have
\begin{align}\label{orth nonlinear1}\limsup_{n \rightarrow \infty} \| \sum_{j=1}^N\sum_{\ell = 1}^{M_j}U(t)\Phi_n^{\ell, j}\|^2_{L_t^qL_x^r} \le \sum_{j=1}^N\sum_{\ell = 1}^{M_j}\limsup_{n \to \infty}\left\|U(t)\Phi_n^{\ell, j}\right\|^2_{L_t^qL_x^r}.\end{align}
\end{lem}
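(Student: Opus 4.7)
The first step is to reduce the claim to a pairwise vanishing statement for the cross-terms. Since $q/2, r/2 \ge 1$, we can write
$$\Bigl\|\sum_\upsilon U(\cdot)\Phi_n^\upsilon\Bigr\|_{L^q_tL^r_x}^2
=\Bigl\|\,\Bigl|\sum_\upsilon U(\cdot)\Phi_n^\upsilon\Bigr|^2\,\Bigr\|_{L^{q/2}_tL^{r/2}_x},$$
and expanding the square together with the triangle inequality in $L^{q/2}_tL^{r/2}_x$ bounds the left-hand side by
$$\sum_\upsilon \|U(\cdot)\Phi_n^\upsilon\|^2_{L^q_tL^r_x}\;+\;\sum_{\upsilon\ne \upsilon'}\bigl\|U(\cdot)\Phi_n^\upsilon\;\overline{U(\cdot)\Phi_n^{\upsilon'}}\bigr\|_{L^{q/2}_tL^{r/2}_x}.$$
Hence the lemma will follow once the cross-term norms are shown to tend to $0$ as $n\to\infty$ for each fixed pair $\upsilon=(\ell,j)\ne\upsilon'=(\ell',j')$.

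Next, by Lemma~\ref{str-radial} the maps $\phi\mapsto U(\cdot)[(h_n)^{-d/2}\phi(\cdot/h_n)]$ are uniformly bounded $L^2\to L^q_tL^r_x$, since both the $L^2$ norm and the Strichartz norm of the rescaled propagation are invariant under the scaling up to constants. A standard approximation argument then allows me to replace each radial profile $\phi^{\ell,j}$ by a smooth radial approximant whose Fourier transform is supported in a fixed annulus, up to an $L^q_tL^r_x$ error that can be made arbitrarily small. It therefore suffices to prove the cross-term vanishing under the stronger assumption that each $\widehat{\phi^{\ell,j}}\in C_c^\infty$ is supported in some fixed annulus $A_{R_1,R_2}$.

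Finally, with such smooth profiles in hand, I change variables $(s,y)=((t-t_n^{\upsilon'})/(h_n^{j'})^\alpha,\,x/h_n^{j'})$ so that $U(t)\Phi_n^{\upsilon'}$ becomes the fixed function $U(s)\phi^{\upsilon'}(y)$, while $U(t)\Phi_n^{\upsilon}$ becomes $\mu_n^{-d/2}U(\mu_n^{-\alpha}(s+\sigma_n))\phi^\upsilon(y/\mu_n)$ with $\mu_n=h_n^j/h_n^{j'}$ and $\sigma_n=(t_n^{\upsilon'}-t_n^\upsilon)/(h_n^{j'})^\alpha$. The asymptotic orthogonality of the parameters then produces two cases. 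If $\mu_n\to 0$ or $\mu_n\to\infty$, the two factors have Fourier supports in widely separated dyadic annuli, and the cross term is controlled by a variant of Lemma~\ref{bilinear-est} applied in $L^{q/2}_tL^{r/2}_x$ (obtained by interpolating the $L^2_{t,x}$ bilinear estimate with the trivial $L^\infty_{t,x}$ bound on compactly Fourier-supported data); the resulting gain in $\mu_n$ drives the cross term to zero. If $\mu_n\equiv 1$ and $|\sigma_n|\to\infty$, the two factors are fixed Strichartz-integrable functions translated in time by diverging amounts, and the vanishing of the product in $L^{q/2}_tL^{r/2}_x$ follows from dominated convergence combined with the $L^q_tL^r_x$-integrability supplied by Lemma~\ref{str-radial} applied to the smoothed profiles. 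The chief obstacle is the scale-separated case: in the absence of Galilean invariance for $\alpha<2$ one must exploit both the radial symmetry and the uniform compact Fourier support of the approximants in order to convert the scale-separation $\mu_n\to 0,\infty$ into a quantitative orthogonality gain via Lemma~\ref{bilinear-est}.
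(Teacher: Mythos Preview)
Your reduction to the pairwise vanishing of the cross terms is the same first step as in the paper, but from that point on the two arguments diverge. The paper does not touch the bilinear estimate at all. Instead it proves a more general statement: for any $\Psi_1,\Psi_2\in L^q_tL^r_x$ and any asymptotically orthogonal family $(h_n^j,t_n^{\ell,j})$,
\[
\limsup_{n\to\infty}\Big\|(h_n^j)^{-d/2}\Psi_1\Big(\tfrac{t-t_n^{\ell,j}}{(h_n^j)^\alpha},\tfrac{x}{h_n^j}\Big)\,(h_n^k)^{-d/2}\Psi_2\Big(\tfrac{t-t_n^{\ell',k}}{(h_n^k)^\alpha},\tfrac{x}{h_n^k}\Big)\Big\|_{L^{q/2}_tL^{r/2}_x}=0.
\]
By density one may take $\Psi_1,\Psi_2\in C_c^\infty(\mathbb R^{1+d})$; H\"older's inequality in the space variable reduces the matter to a product of two $L^q_t$ functions of $t$ alone, and then the compact \emph{time} support together with the orthogonality hypothesis forces the product to vanish, in both the scale-separated and the time-separated cases simultaneously. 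No frequency localisation, no bilinear smoothing, and no radiality are used at this step.

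Your route---approximating the profiles in $L^2$ and invoking Lemma~\ref{bilinear-est} for the scale-separated case---also works, but it is heavier machinery than needed, and one detail is not quite right as written: interpolating the $L^2_{t,x}$ bilinear estimate against an $L^\infty_{t,x}$ bound only yields diagonal $L^p_{t,x}$ spaces, not the mixed $L^{q/2}_tL^{r/2}_x$ norm you need. You should either cite Lemma~\ref{bilinear-est} directly (which already delivers the mixed-norm estimate with a gain $2^{-|j-k|\epsilon}$ for $\alpha$-admissible $(q,r)$ in the stated range), or interpolate the $L^2_{t,x}$ bound against the trivial H\"older/Strichartz bound of Lemma~\ref{str-radial} rather than against $L^\infty_{t,x}$. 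What the paper's approach buys is uniformity and simplicity: one density argument in space--time handles both orthogonality regimes at once, and the result is stated at a level of generality (arbitrary $\Psi_i\in L^q_tL^r_x$) that is later reused for the nonlinear profiles.
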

From Lemma \ref{ortho} and Strichartz estimates (Lemma \ref{str-radial}) it follows that
\begin{align*}
\limsup_{n \rightarrow \infty}\|U(t)R_n^{N,M1,\cdots,M_N}\|_{L_t^qL_x^r}^2 &\le \sum_{\tiny \begin{array}{c} 1 \leq j \leq N\\ M_j < \Lambda_N\end{array}} \sum_{M_j < \ell < \Lambda_N} \limsup_{n
\rightarrow \infty}
\|U(t)\Phi_n^{\ell, j}\|_{L_t^qL_x^r}^2\\
&\lesssim \sum_{1 \leq j \leq N} \sum_{\ell > M_j} \|\phi^{\ell,j}\|_2^2.
\end{align*}
Since $\sum_{j, \ell} \|\phi^{\ell,j}\|_2^2$ is convergent,
$$\limsup_{n \rightarrow \infty} \left(\sum_{j=1}^N\sum_{\ell > M_j} \|U(t)\Phi_n^{\ell, j}\|_{L_t^qL_x^r}^2\right)^{\frac 12} \leq \frac \varepsilon3,$$
provided that $\min(N, M_1, \cdots, M_N\}$ is large enough. This completes the proof of Theorem \ref{main}.

\begin{proof}[Proof of Lemma \ref{ortho}]
It suffices to show that for $(j, \ell) \neq (k, \ell')$,
\begin{align}\label{orth nonlinear2}\limsup_{n \rightarrow \infty} \|U(t)\Phi_n^{\ell, j}\; U(t)\Phi_n^{\ell', k}\|_{L_t^\frac q2L_x^\frac r2} = 0.\end{align}
When $(j, \ell) \neq (k, \ell')$, there are two possibilities:
\begin{enumerate}
\item $\limsup_{n \rightarrow \infty} \left(\frac {h^k_n}{h^j_n} + \frac {h^j_n}{h^k_n}\right) = \infty$,
\item $(h^j_n)=(h^k_n)$ and $\limsup_{n\rightarrow\infty}\frac{|t^{\ell, j}_n - t^{\ell', k}_n|}{(h^j_n)^\alpha} = \infty$.
\end{enumerate}
More generally we will prove that if $\Psi_1, \Psi_2 \in L^q_tL^r_x$, then
$$\limsup_{n \rightarrow \infty} \Big\|\frac 1{(h^j_n)^{\frac d2}} \Psi_1(\frac {t - t^{\ell,j}_n}{(h^j_n)^\al}, \frac x{h^j_n}) \frac 1{(h^k_n)^{\frac d2}}\Psi_2(\frac {t-t^{\ell',k}_n}{(h^k_n)^\al},\frac x{h^k_n})\Big\|_{L^{\frac q2}_tL^{\frac r2}_x} = 0.$$
By density argument, it suffices  to show this for $\Psi_1,
\Psi_2 \in C^\infty_0(\mathbb{R} \times \mathbb{R}^d)$. Using
H\"{o}lder's inequality and scaling on space, we have
\begin{align*}
& A_n := \Big\|\frac 1{(h^j_n)^{\frac d2}} \Psi_1(\frac {t - t^{\ell, j}_n}{(h^j_n)^\al}, \frac x{h^j_n})
\frac 1{(h^k_n)^{\frac d2}}\Psi_2(\frac {t-t^{\ell', k}_n}{(h^k_n)^\al},\frac x{h^k_n})\Big\|_{L^{\frac q2}_tL^{\frac r2}_x}
\\
& \leq \Big\|\frac 1{(h^j_n)^{\frac d2 - \frac dr}}\Big\| \Psi_1(\frac {t -
t^{\ell, j}_n}{(h^j_n)^\al}, x)\Big\|_{L^r_x} \frac 1{(h^k_n)^{\frac d2 -
\frac dr}}\Big\|\Psi_2(\frac {t-t^{\ell', k}_n}{(h^k_n)^\al},
x)\Big\|_{L^r_x}\Big\|_{L^{\frac q2}_t}.
\end{align*}
Then by time translation and scaling on time, we have
\begin{align*}
A_n
&\leq \Big\|(\frac {h^j_n}{h^k_n})^{\frac \al q}\Big\|\Psi_1(t, x)\Big\|_{L^r_x}
\Big\| \Psi_2((\frac {h^j_n}{h^k_n})^\al t - \frac{t^{\ell',
k}_n-t^{\ell, j}_n}{(h^k_n)^\al},x)\Big\|_{L^r_x}\Big\|_{L^{\frac q2}_t}.
\end{align*}
Since the support in time of $\|\Psi_1(t, \cdot)\|_{L_x^r}$ is compact, from the above conditions (1) and (2) it readily follows that $\limsup_{n \to \infty} A_n = 0$.
This completes the proof of Lemma \ref{ortho}.
\end{proof}

\section{Nonlinear profile decomposition} In this section we prove
Proposition ~\ref{nlpf} by making use of Theorem \ref{main}.

\medskip

For simplicity of notations, we denote $\Ga^j_n\phi^j$ by $\phi^j_n$ and $\Ga^j_n\psi^j$ by $\psi^j_n$.
First, we will show the forward implication. Fix $I =[a,b]\subset I_n$ for all $n$. We set $e^l_n = u_n - \sum_{j=1}^l \Ga^j_n \psi^j - U(\cdot)\omega^l_n$, and 
$$|\!|\!|e^l_n|\!|\!|_{[I]} := \|e^l_n\|_{C_tL^2_x(I \times \mathbb{R}^d)} + \|e^l_n\|_{L^{q_\circ}_tL^{r_\circ}_x(I \times \mathbb{R}^d)}.$$
Since $ \lim_l \limsup_{n} \|U(\cdot)\omega^l_n\|_{L^{q_\circ}_tL^{r_\circ}_x(I \times \mathbb{R}^d)} =0$ and $$ \|u_n\|_{L^{q_\circ}_tL^{r_\circ}_x(I_n\times\mathbb{R}^d)} \le \sum_{j=1}^l \|\psi^j_n\|_{L^{q_\circ}_tL^{r_\circ}_x(I_n\times\mathbb{R}^d)} +1 $$ for a large $l$, it suffices to show
\begin{equation}\label{beta}
\lim_{l\to \infty} \limsup_{n \rightarrow \infty} |\!|\!|e^l_n|\!|\!|_{[I_n]} \rightarrow 0. \end{equation}

We write the equation for $e^l_n$ in the following:
\begin{align*}
\left\{\begin{array}{l} i(e^l_n)_t + (-\Delta)^\frac\al2 e^l_n = F(\sum_{j=1}^l \psi^j_n + U(\cdot)\omega^l_n + e^l_n) - \sum_{j=1}^l F(\psi^j_n), \\
e^l_n(0, x) = \sum_{j=1}^l \phi^j_n(x) - \psi^j_n(0,x),\end{array}\right.
\end{align*}
where $F(v) = (|x|^{-\al} * |v|^2)v$.
Then Strichartz estimates give
\begin{align}\begin{aligned}\label{err-n}
|\!|\!|e^l_n|\!|\!|_{[I]}& \lesssim \  \|e^l_n(a,\cdot)\|_{L^2_x}\\
&\qquad + \|F(\sum_{j=1}^l\psi^j_n + U(\cdot)\omega^l_n + e^l_n)-F(\sum_{j=1}^l \psi^j_n + U(\cdot)\omega^l_n)\|_{L^{1}_tL^{2}_x(I\times \mathbb{R}^d)} \\
&\qquad+ \|F(\sum_{j=1}^l\psi^j_n + U(\cdot)\omega^l_n) - \sum_{j=1}^l F(\psi^j_n)\|_{L^{1}_tL^{2}_x(I\times \mathbb{R}^d)}.
\end{aligned}\end{align}
To estimate each term on the right hand side, we use the orthogonality of nonlinear profile, in addition to the Hardy-Littlewood-Sobolev inequality. Denote the third term in \eqref{err-n} by $$\beta^l_n := \|F(\sum_{j=1}^l\psi^j_n + U(\cdot)\omega^l_n) - \sum_{j=1}^l F(\psi^j_n)\|_{L^1_tL^2_x(I\times \mathbb{R}^d)}.$$
\begin{lem}\label{estbln}
There exist $n_0, l_0$ such that for all $n \ge n_0,l\ge l_0$,
\begin{align}\label{uni-bound}
\sup_{l,n}\|\sum_{j=1}^l\psi^j_n + U(\cdot)\omega^l_n\|_{L^{q_\circ}_tL^{r_\circ}_x(I \times \mathbb{R}^d)} < \infty,
\end{align}
and
\begin{align}\label{lim}
\lim_{l\to \infty} \limsup_{n \rightarrow \infty} \beta^l_n \rightarrow 0.
 \end{align}
\end{lem}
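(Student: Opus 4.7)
The plan is to combine the scattering theory for small $L^2$ data with the asymptotic orthogonality of the scales and times $(h^j_n,t^j_n)$ from Theorem~\ref{main}. The basic ingredient is a trilinear inequality obtained from Hardy--Littlewood--Sobolev together with H\"older: with $(\qq,\rr)=(3,\tfrac{6d}{3d-2\al})$, a short computation gives
\[
\|(|x|^{-\al}*(fg))h\|_{L^1_t L^2_x} \lesssim \|fg\|_{L^{\qq/2}_t L^{\rr/2}_x}\,\|h\|_{L^\qq_t L^\rr_x} \lesssim \|f\|_{L^\qq_t L^\rr_x}\|g\|_{L^\qq_t L^\rr_x}\|h\|_{L^\qq_t L^\rr_x}.
\]
Combined with Lemma~\ref{str-radial}, this yields a small-data threshold $\eta_0>0$ such that whenever $\|\phi\|_{L^2}<\eta_0$ the corresponding nonlinear profile $\psi$ is globally defined with $\|\psi\|_{L^\qq_t L^\rr_x(\mathbb R\times\mathbb R^d)}\lesssim \|\phi\|_{L^2}$.

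To prove~(\ref{uni-bound}) I would partition $\{1,\dots,l\}=\mathcal{B}\cup\mathcal{S}$, where $\mathcal{B}=\{j:\|\phi^j\|_{L^2}\ge\eta_0\}$ has cardinality bounded by $\eta_0^{-2}\limsup_n\|u^0_n\|_{L^2}^2$. Profiles indexed by $\mathcal{S}$ satisfy $\|\psi^j_n\|_{L^\qq_tL^\rr_x}\lesssim\|\phi^j\|_{L^2}$ with $\ell^2$-summable squares, while the finitely many profiles indexed by $\mathcal{B}$ are controlled by hypothesis~(1); hence $\sum_{j=1}^l\|\psi^j_n\|_{L^\qq_tL^\rr_x}^2\le C$ uniformly in $l$ and $n$. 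Passing from this to $\|\sum_j\psi^j_n\|_{L^\qq_tL^\rr_x}^2$ is done by expanding the square and taking the $L^{\qq/2}_tL^{\rr/2}_x$ norm: the diagonal reproduces $\sum_j\|\psi^j_n\|_{L^\qq_tL^\rr_x}^2$, while the off-diagonal cross terms $\psi^j_n\overline{\psi^k_n}$, $j\ne k$, vanish asymptotically by precisely the orthogonality argument used in the proof of Lemma~\ref{ortho}. Finally $\|U(\cdot)\omega^l_n\|_{L^\qq_tL^\rr_x}\lesssim\|\omega^l_n\|_{L^2}\lesssim 1$, which completes~(\ref{uni-bound}).

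For~(\ref{lim}) I would write $V_n^l=\sum_{j=1}^l\psi^j_n$ and $W_n^l=U(\cdot)\omega^l_n$ and decompose
\[
F(V_n^l+W_n^l)-\sum_{j=1}^l F(\psi^j_n) = \bigl[F(V_n^l+W_n^l)-F(V_n^l)\bigr] + \bigl[F(V_n^l)-\sum_{j=1}^l F(\psi^j_n)\bigr].
\]
The first bracket is a sum of cubic Hartree expressions each carrying at least one factor of $W_n^l$; the trilinear estimate and~(\ref{uni-bound}) bound its $L^1_tL^2_x$ norm by a constant multiple of $\|W_n^l\|_{L^\qq_tL^\rr_x}$, which vanishes as $l,n\to\infty$ by Theorem~\ref{main}. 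The second bracket is the sum over triples $(j,k,m)\in\{1,\dots,l\}^3$ that are not all equal of $(|x|^{-\al}*(\psi^j_n\overline{\psi^k_n}))\psi^m_n$. Triples with $j\ne k$ are harmless: the trilinear estimate combined with $\|\psi^j_n\overline{\psi^k_n}\|_{L^{\qq/2}_tL^{\rr/2}_x}\to 0$ (from Lemma~\ref{ortho}) kills them. The main obstacle is the diagonal-in-convolution triples $(j,j,m)$ with $j\ne m$, since the nonlocality of the Hartree kernel precludes any direct bilinear orthogonality between $|\psi^j_n|^2$ and $\psi^m_n$. My plan to handle this case is to approximate $\psi^j,\psi^m$ in $L^\qq_tL^\rr_x$ by compactly supported $\chi^j,\chi^m\in C^\infty_c(\mathbb R\times\mathbb R^d)$ and then compute directly: rescaling $x\mapsto h^m_n x$, $t\mapsto (h^m_n)^\al t+t^m_n$ together with the asymptotic orthogonality of $(h^j_n,t^j_n)$ and $(h^m_n,t^m_n)$ shows that $\|(|x|^{-\al}*|\Ga^j_n\chi^j|^2)\,\Ga^m_n\chi^m\|_{L^1_tL^2_x}$ is $O\bigl(\min(h^j_n/h^m_n,h^m_n/h^j_n)^{\al}\bigr)$ in the scale-separated cases and is eventually zero when the scales agree but the time parameters separate. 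Summing finitely many vanishing terms and sending the approximation error to zero then yields~(\ref{lim}).
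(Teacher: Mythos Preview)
Your proof follows the paper's approach closely: for \eqref{uni-bound} you use small-data scattering for all but finitely many profiles together with the bilinear orthogonality of Lemma~\ref{ortho} to pass from $\sum_j\|\psi^j_n\|^2$ to $\|\sum_j\psi^j_n\|^2$, exactly as the paper does; for \eqref{lim} you expand the Hartree cubic and invoke the same orthogonality for the off-diagonal pieces, which is again the paper's argument. Where you go further is in the diagonal-in-convolution triples $(j,j,m)$ with $j\neq m$ --- the paper disposes of these by a bare reference to ``orthogonality of nonlinear profiles (as like the proof of Lemma~\ref{ortho})'', whereas you spell out the density-and-rescaling computation; this is the right way to justify that step, though the decay rate you actually obtain is $O\bigl(\min(h^j_n/h^m_n,h^m_n/h^j_n)^{\min(\alpha,d/2)}\bigr)$ rather than the exponent $\alpha$ in all cases.
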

\begin{proof}
First, we show \eqref{uni-bound}. Since $\lim_{l\to \infty} \limsup_{n \rightarrow \infty}\|U(\cdot)\omega^l_n\|_{L^{q_\circ}_tL^{r_\circ}_x(I \times \mathbb{R}^d)} = 0$, it suffices to show $$\|\sum_{j=1}^l\psi^j_n\|_{L^{q_\circ}_tL^{r_\circ}_x(I \times \mathbb{R}^d)} < \infty.$$
It follows from the small data global well-posedness that
$\sum_{j=l_0}^\infty \| \psi^j_n \|_{L^{q_\circ}_tL^{r_\circ}_x(I \times \mathbb{R}^d)}^2 \leq \sum_{j=l_0}^\infty \|\phi_n^j\|_{L^2_x}^2 < 1.$
for some large $l_0$.
Due to the orthogonality \eqref{orth nonlinear1} and \eqref{orth nonlinear2}, for any $l$, we have
\begin{align*}
\|\sum_{j=1}^l \psi^j_n \|_{L^{q_\circ}_tL^{r_\circ}_x(I \times \mathbb{R}^d)}^2 &\leq \sum_{j=1}^l\|\psi^j_n\|_{L^{q_\circ}_tL^{r_\circ}_x(I \times \mathbb{R}^d)}^2 + o_n(1)  \\
       &\leq \sum_{j=1}^{l_0}\|\psi^j_n\|_{L^{q_\circ}_tL^{r_\circ}_x(I \times \mathbb{R}^d)}^2+2
       \lesssim M\cdot l_0 + 2,
\end{align*}
where $ M$ is a uniform bound of $\{\|u_n^0\|^2_{L^2}\} $.
For \eqref{lim}, we expand the cubical expressions $(F(\sum_l \cdot\,))$ and estimate
\begin{align*}
\beta^l_n 
&\;\;\leq \sum_{\{j_1 = j_2 = j_3 \}^c} \|(|x|^{-\alpha}*(\psi^{j_1}_n\psi^{j_2}_n))\psi^{j_3}_n\|_{L_t^1L_x^2(I \times \mathbb{R}^d)}\\
&\qquad + \sum_{j_1,j_2} \|(|x|^{-\alpha}*(\psi^{j_1}_nU(\cdot)\omega^l_n)\psi^{j_2}_n\|_{L_t^1L_x^2(I \times \mathbb{R}^d)}  \\
&\qquad +  \sum_{j_1, j_2} \|(|x|^{-\alpha}*(\psi^{j_1}_nU(\cdot)\omega^l_n)\psi^{j_2}_n\|_{L_t^1L_x^2(I \times \mathbb{R}^d)} + \|F(U(\cdot)\omega^l_n)\|_{L_t^1L_x^2(I \times \mathbb{R}^d)}.
\end{align*}
Since $(q_\circ, r_\circ)$ is $\alpha$-admissible, we can use the estimate $$\|(|x|^{-\alpha}* (v_1v_2))v_3\|_{L_t^1L_x^2(I \times \mathbb{R}^d)} \lesssim \prod_{1 \le i \le 3}\|v_i\|_{L^{q_\circ}_tL^{r_\circ}_x(I \times \mathbb{R}^d)},$$  to get
\begin{align*}
\beta^l_n &\lesssim \sum_{\{j_1 = j_2 = j_3 \}^c}\prod_{1 \le i \le 3} \|\psi^{j_i}_n\|_{L^{q_\circ}_tL^{r_\circ}_x(I \times \mathbb{R}^d)}\\
&\quad + \|U(\cdot)\omega^l_n\|_{L^{q_\circ}_tL^{r_\circ}_x(I \times \mathbb{R}^d)}\left(\sum_{j_1,j_2} \prod_{i = 1,2}\|\psi^{j_i}_n\|_{L^{q_\circ}_tL^{r_\circ}_x(I \times \mathbb{R}^d)} + \|U(\cdot)\omega^l_n\|_{L^{q_\circ}_tL^{r_\circ}_x(I \times \mathbb{R}^d)}^2\right).
\end{align*}
Then from the orthogonality of nonlinear profiles (as like the proof of Lemma \ref{ortho}), \eqref{uni-bound} and
$$ \limsup\limits_{n \to \infty}\|U(t)\omega_n^l\|_{L^{q_\circ}_tL^{r_\circ}_x} = 0, \quad \text{for each } l $$
we conclude \eqref{lim}.
\end{proof}
In order to handle the second term of \eqref{err-n}, we first use H\"{o}lder's and the Hardy-Littlewood-Sobolev inequality to estimate
\begin{align*}
& \|F(\sum_{j=1}^l \psi^j_n + U(\cdot)\omega^l_n + e^l_n) - F(\sum_{j=1}^l \psi^j_n + U(\cdot)\omega^l_n)\|_{L^1_tL^2_x(I \times \mathbb{R}^d)}\\
&\lesssim \sum_{k=1}^{2} \|\sum_{j=1}^l \psi^j_n + U(\cdot)\omega^l_n\|_{L^{q_\circ}_tL^{r_\circ}_x(I \times \mathbb{R}^d)}^{3-k} \|e^l_n\|_{L^{q_\circ}_tL^{r_\circ}_x(I \times \mathbb{R}^d)}^k.
\end{align*}

Substituting this into \eqref{err-n} and taking limsup, by Lemma \ref{uni-bound} and \eqref{uni-bound} we obtain
\begin{align*}
\limsup_{n\to \infty}|\!|\!| e^l_n |\!|\!|_{[I]} \lesssim \limsup_{n\to \infty}\|e^l_n(a,\cdot)\|_{L^2} & + \limsup_{n\to \infty}\sum_{j=1}^l\| \psi^j_n \|_{L^{q_\circ}_tL^{r_\circ}_x(I \times \mathbb{R}^d)}^{2}\|e^l_n\|_{L^{q_\circ}_tL^{r_\circ}_x(I \times \mathbb{R}^d)}\\
& + \limsup_{n\to \infty}\|e^l_n\|_{L^{q_\circ}_tL^{r_\circ}_x(I \times \mathbb{R}^d)}^2.
\end{align*}
To handle remaining terms in the right hand side, we will divide interval $I_n$ as in following lemma.
\begin{lem}\label{spit}
For given $\epsilon >0 $, there exist intervals $I_n^1, \dots,
I_n^\ell$ such that $I_n$ = $\cup_{i=1}^\ell I^i_n$ and
$$\limsup_{n \to \infty}\sum_{j=1}^l \| \psi^j_n \|_{L^{q_\circ}_tL^{r_\circ}_x(I^i_n \times \mathbb{R}^d)} \leq \epsilon, \quad 1 \le i \le \ell.$$
\end{lem}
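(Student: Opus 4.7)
The plan is to reduce to the individual nonlinear profiles and then take a common refinement of the resulting partitions. Since $l$ is a fixed integer, it suffices to partition $I_n$ into finitely many subintervals on each of which $\|\psi^j_n\|_{L^{q_\circ}_tL^{r_\circ}_x} \le \epsilon/l$ for every $j = 1, \ldots, l$ separately; summing over the $l$ profiles then gives the desired bound $\epsilon$.

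First I would invoke hypothesis (1) of Proposition~\ref{nlpf}, which asserts $\|\psi^j_n\|_{L^{q_\circ}_tL^{r_\circ}_x(I_n \times \mathbb{R}^d)} < \infty$ for each $j \le l$. Since $l$ is held fixed and the $\limsup$ in $n$ of each of these norms is finite, by passing to a subsequence there exist $n_0$ and a constant $A = A(l) < \infty$ such that $\|\psi^j_n\|_{L^{q_\circ}_tL^{r_\circ}_x(I_n \times \mathbb{R}^d)} \le A$ for all $n \ge n_0$ and every $j \le l$. Then, for each such $n$ and each $j$, I apply the standard greedy subdivision to the nonnegative integrable function $t \mapsto \|\psi^j_n(t, \cdot)\|_{L^{r_\circ}_x}^{q_\circ}$ on $I_n$: this produces at most $N := \lceil (Al/\epsilon)^{q_\circ} \rceil + 1$ consecutive subintervals $J^{j,1}_n, \ldots, J^{j,N}_n$ covering $I_n$ (padding with empty sets) such that $\|\psi^j_n\|_{L^{q_\circ}_tL^{r_\circ}_x(J^{j,k}_n \times \mathbb{R}^d)} \le \epsilon/l$ on each piece. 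The key observation is that $N$ is independent of $n$.

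Next I would form the common refinement $\{I^i_n\}_{i=1}^\ell$ of the $l$ partitions $\{J^{j,k}_n\}_k$, and note that $\ell \le 1 + l(N-1)$ is a constant independent of $n$ (padding once more if needed so that the number of pieces is exactly $\ell$). Each $I^i_n$ is contained in a single $J^{j,k}_n$ for every $j$, hence $\|\psi^j_n\|_{L^{q_\circ}_tL^{r_\circ}_x(I^i_n \times \mathbb{R}^d)} \le \epsilon/l$ for every $j$, and summation over $j = 1, \ldots, l$ yields $\sum_{j=1}^l \|\psi^j_n\|_{L^{q_\circ}_tL^{r_\circ}_x(I^i_n \times \mathbb{R}^d)} \le \epsilon$ for every $n \ge n_0$ and every $i$. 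Taking $\limsup_{n \to \infty}$ concludes the argument.

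The only point requiring mild care is the uniformity in $n$ of the cutoff $N$; this is ensured by the finiteness of $\limsup_n \|\psi^j_n\|_{L^{q_\circ}_tL^{r_\circ}_x(I_n)}$ together with the fact that $l$ is fixed. No control uniform in $l$ is needed at this stage, since the sum over $j$ has a fixed number of terms. Consequently, this is essentially a soft partition argument and I do not anticipate any substantial obstacle; the $\alpha$-admissibility of $(q_\circ, r_\circ)$ and the orthogonality of the profiles play no role in this specific lemma (though they are crucial in the estimates that use it).
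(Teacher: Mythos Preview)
Your argument is correct for the lemma as stated, but it proceeds differently from the paper's proof in two notable respects. First, the paper exploits the structure $\psi^j_n = \Gamma^j_n\psi^j$: it partitions the single fixed function $\psi^j$ on an $n$-independent interval $\widetilde{I}^j$ and then pulls the partition back to $I_n$ via the scaling/translation, whereas you partition $\psi^j_n$ afresh on $I_n$ for every $n$ and rely on the uniform bound $A$ to keep the number of pieces independent of $n$. Second, and more consequentially, the paper begins by invoking small-data global well-posedness together with $\sum_j\|\phi^j\|_{L^2}^2<\infty$ to choose $\widetilde{l}$ (depending only on $\epsilon$) so that the tail $\sum_{j\ge\widetilde{l}}\|\psi^j_n\|^2$ is already below $\epsilon/2$ on any subinterval; it then partitions only the finitely many profiles $j<\widetilde{l}$. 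This produces a number of subintervals $\ell$ that depends on $\epsilon$ but \emph{not} on $l$. Your construction, with threshold $\epsilon/l$ per profile, yields $\ell=\ell(l)$ growing with $l$. For the lemma itself this is harmless, and your closing remark is literally correct. However, in the application immediately following the lemma one iterates the continuity argument over the $\ell$ subintervals and then sends $l\to\infty$; the accumulated factor behaves like $C^{\ell}$, so having $\ell$ independent of $l$ is exactly what lets the paper pass to $\lim_{l}\limsup_{n}$ cleanly. Your approach is more elementary and uses neither the profile symmetry nor small-data theory, but the paper's detour through the tail estimate buys uniformity in $l$ that is needed downstream.
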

\begin{proof}
The global well-posedness for small data and orthogonality give
$$\limsup_{n \to \infty}\|\sum_{j \geq \widetilde{l}} \psi^j_n \|_{L^{q_\circ}_tL^{r_\circ}_x(I^i_n \times \mathbb{R}^d)}^2 \le \limsup_{n \to \infty}\sum_{j \geq \widetilde{l}} \|\psi^j_n\|_{L^{q_\circ}_tL^{r_\circ}_x(I^i_n \times \mathbb{R}^d)}^2 \leq \frac{\epsilon}2$$
for sufficiently large $\widetilde{l}$.
Let $I^1$ be maximal existence interval of $\psi^1$.
Since
$$
\|\psi^1_n\|_{L^{q_\circ}_tL^{r_\circ}_x(I_n \times \mathbb{R}^d)}
= \|\psi^1\|_{L^{q_\circ}_tL^{r_\circ}_x((I_n + t^1_n)*(h^1_n)^\alpha ) \times \mathbb{R}^d)},
$$
there exists $\widetilde{I^1} \subset I^1$ such that
$\|\psi^1\|_{L^{q_\circ}_tL^{r_\circ}_x(\widetilde{I}^1\times \mathbb{R}^d)} <
\infty$  and $(I_n + t^1_n)*(h^1_n)^\alpha  \subset
\widetilde{I^1}.$ Hence we can find $\ell_1$ and $\widetilde{I^1_i}$
such that $\widetilde{I_1} = \cup_{i=1}^{\ell_1} \widetilde{I^1_i}$
and $\|\psi^1\|_{L^{q_\circ}_tL^{r_\circ}_x(\widetilde{I}^1_i \times \mathbb{R}^d)}
\leq {\epsilon}/{2\widetilde{l}}\;.$ Thus
$$
\|\psi^1_n\|_{L^{q_\circ}_tL^{r_\circ}_x(\widetilde{I}^1_{n,i} \times \mathbb{R}^d)} <  {\epsilon}/{2\widetilde{l}}\;,
$$
where $\widetilde{I}^1_{n,i} = I^1_i/(h^1_n)^\alpha - t^1_n$. By
repeating this argument we get $\ell_j$ and $\widetilde{I}^j_{n,i}$,
for $1 \leq j \leq \widetilde{l}$, satisfying
$$
\|\psi^j_n\|_{L^{q_\circ}_tL^{r_\circ}_x(\widetilde{I}^j_{n,i} \times
\mathbb{R}^d)} <  {\epsilon}/{2\widetilde{l}}\;.
$$ Then by taking
intersection of $\widetilde{I}^j_{n,i}$ and $I_n$, we have
$\{I_n^i\}_{i=1}^\ell$ with $\ell = \sum_{i=1}^{\widetilde{l}}
\ell^i$.
\end{proof}
For $I = I_n^1$ we thus have up to a subsequence
\begin{align*}
|\!|\!| e^l_n |\!|\!|_{[I^1_n]}  \lesssim \|e^l_n(0,\cdot)\|_{L^2} +
\beta_n^l + \epsilon^{2}\|e^l_n\|_{L^{q_\circ}_tL^{r_\circ}_x(I^1_n \times \mathbb{R}^d)} +
|\!|\!| e^l_n |\!|\!|_{[I^1_n]}^2,\end{align*} provided $n$ is
sufficiently large. By taking small $\epsilon>0$ we get
$$|\!|\!| e^l_n |\!|\!|_{[I^1_n]} \lesssim \|e^l_n(0,\cdot)\|_{L^2} + \beta_n^l + |\!|\!| e^l_n |\!|\!|_{[I^1_n]}^2.$$
Since $\lim_l\limsup_{n}\|e^l_n(0,\cdot)\|_{L^2} = 0$,
by continuity argument $\limsup_{l, n \rightarrow \infty}|\!|\!| e^l_n |\!|\!|_{[I^1_n]}$ $ = 0.$
Particularly, this implies that
$\limsup_{l, n \rightarrow \infty} \|e^l_n(b^1_n, \cdot)\|_{L^2_x} = 0,$
where $I^1_n = [a^1_n,b^1_n]$ and $a_n^1 = 0$.
Then repeated arguments give
$\lim_{n \rightarrow \infty} |\!|\!| e^l_n |\!|\!|_{[I^j_n]} \rightarrow 0 \text{ as } l \rightarrow \infty$
for $1 \leq j \leq \ell$.

Now we show the implication $(2)\to (1)$. Suppose that the statement
is wrong. Then $\limsup_{n \rightarrow \infty}
\|u_n\|_{L^{q_\circ}_tL^{r_\circ}_x(I_n \times \mathbb{R}^d)} < \infty$ and there
exists $j_0$ such that
\[
\limsup_{n \rightarrow \infty} \|\psi^{j_0}_n\|_{L^{q_\circ}_tL^{r_\circ}_x(I_n
\times \mathbb{R}^d)} = \infty.\] By continuity, for given $M$, we
have $\tilde{I_n} \subset I_n$ satisfying
\begin{align*}
&M < \limsup_{n \rightarrow \infty}
\|\psi^{j_0}_n\|_{L^{q_\circ}_tL^{r_\circ}_x(\tilde{I_n} \times \mathbb{R}^d)},\\
&\limsup_{l \rightarrow \infty}\limsup_{n \rightarrow \infty}
\sum_{j=1}^{l} \|\psi^j_n\|_{L^{q_\circ}_tL^{r_\circ}_x(\tilde{I_n} \times
\mathbb{R}^d)} <\infty.
\end{align*}
Then the implication $(1)\to (2)$ gives $u_n = \sum_{j=1}^l \psi^j_n
+ U(\cdot)\omega^l_n + e^l_n.$ Squaring this, we get
$$|u_n - U(\cdot)\omega^l_n - e^l_n|^2 - {\rm Re}\sum_{j_1 > j_2}^l \psi^{j_1}_n\overline{\psi^{j_2}_n} = \sum_{j=1}^l |\psi^j_n|^2.$$
Then Minkowski's inequality with $q,r \geq 2$ gives
\begin{align*}
& \big\|\big(\sum_{j=1}^l |\psi^j_n|^2\big)^{\frac 12}
\big\|_{L^{q_\circ}_tL^{r_\circ}_x(\tilde{I_n} \times \mathbb{R}^d)}^2
= \big\| \sum_{j=1}^l |\psi^j_n|^2 \big\|_{L^{q_\circ/2}_tL^{r_\circ/2}_x(\tilde{I_n} \times \mathbb{R}^d)}\\
&\qquad\qquad= \big\||u_n - U(\cdot)\omega^l_n - e^l_n|^2 - {\rm Re}\sum_{j_1 > j_2}^l\psi^{j_1}_n\overline{\psi^{j_2}_n}\big\|_{L^{q_\circ/2}_tL^{r_\circ/2}_x(\tilde{I_n} \times \mathbb{R}^d)}\\
&\qquad\qquad \lesssim \|u_n\|_{L^{q_\circ}_tL^{r_\circ}_x(\tilde{I_n} \times \mathbb{R}^d)}^2 + \|U(\cdot)\omega^l_n\|_{L^{q_\circ}_tL^{r_\circ}_x(\tilde{I_n} \times \mathbb{R}^d)}^2 + \|e^l_n\|_{L^{q_\circ}_tL^{r_\circ}_x(\tilde{I_n} \times \mathbb{R}^d)}^2\\
&\qquad\qquad\qquad + \sum_{j_1 > j_2}^l\|\psi^{j_1}_n\overline{\psi^{j_2}_n}\|_{L^{q_\circ/2}_tL^{r_\circ/2}_x(\tilde{I_n} \times \mathbb{R}^d)}.
\end{align*}
Due to orthogonality, we obtain
\begin{align*}
 \limsup_{l \rightarrow \infty}\limsup_{n \rightarrow \infty} \big\|(\sum_{j=1}^l |\psi^j_n|^2)^{\frac 12} \big\|_{L^{q_\circ}_tL^{r_\circ}_x(\tilde{I_n} \times \mathbb{R}^d)}\lesssim \limsup_{n \rightarrow \infty}\|u_n\|_{L^{q_\circ}_tL^{r_\circ}_x(\tilde{I_n} \times \mathbb{R}^d)}.
\end{align*}

On the other hand, we have
\begin{align*}
& \big\||u_n - U(\cdot)\omega^l_n - e^l_n|^2\big\|_{L^{q_\circ/2}_tL^{r_\circ/2}_x(\tilde{I_n} \times \mathbb{R}^d)}\\
&\leq \big\| \sum_{j=1}^l |\psi^j_n|^2 \big\|_{L^{q_\circ/2}_tL^{r_\circ/2}_x(\tilde{I_n} \times \mathbb{R}^d)} + \big\|{\rm Re}\sum_{j_1 > j_2}^l\psi^{j_1}_n\overline{\psi^{j_2}_n} \big\|_{L^{q_\circ/2}_tL^{r_\circ/2}_x(\tilde{I_n} \times \mathbb{R}^d)}.
\end{align*}
And we also have
\begin{align*}
& \big\||u_n - U(\cdot)\omega^l_n - e^l_n|^2\big\|_{L^{q_\circ/2}_tL^{r_\circ/2}_x(\tilde{I_n} \times \mathbb{R}^d)}^{\frac 12}
 = \|u_n - U(\cdot)\omega^l_n - e^l_n\|_{L^{q_\circ}_tL^{r_\circ}_x(\tilde{I_n} \times \mathbb{R}^d)}\\
&\qquad\geq \|u_n\|_{L^{q_\circ}_tL^{r_\circ}_x(\tilde{I_n} \times \mathbb{R}^d)} - \|U(\cdot)\omega^l_n\|_{L^{q_\circ}_tL^{r_\circ}_x(\tilde{I_n} \times \mathbb{R}^d)} - \|e^l_n\|_{L^{q_\circ}_tL^{r_\circ}_x(\tilde{I_n} \times \mathbb{R}^d)}.
\end{align*}
Hence we obtain
\begin{align*}
& \|u_n\|_{L^{q_\circ}_tL^{r_\circ}_x(\tilde{I_n} \times \mathbb{R}^d)} \\
&\leq \big(\big\|\sum_{j=1}^l |\psi^j_n|^2 \big\|_{L^{q_\circ/2}_tL^{r_\circ/2}_x(\tilde{I_n} \times \mathbb{R}^d)} + \big\|{\rm Re}\sum_{j_1 > j_2}^l\psi^{j_1}_n\overline{\psi^{j_2}_n} \big\|_{L^{q_\circ/2}_tL^{r_\circ/2}_x(\tilde{I_n} \times \mathbb{R}^d)}\big)^{\frac 12}\\
&\qquad + \|U(\cdot)\omega^l_n\|_{L^{q_\circ}_tL^{r_\circ}_x(\tilde{I_n} \times \mathbb{R}^d)} + \|e^l_n\|_{L^{q_\circ}_tL^{r_\circ}_x(\tilde{I_n} \times \mathbb{R}^d)}
\end{align*}
and
$
\limsup\limits_{n \to \infty}
\|u_n\|_{L^{q_\circ}_tL^{r_\circ}_x(\tilde{I_n} \times \mathbb{R}^d)}
$
$ \leq
\limsup\limits_{l \to \infty,\,n
\to \infty}$ $\big\|(\sum_{j=1}^l |\psi^j_n|^2)^\frac
12\big\|_{L^{q_\circ}_tL^{r_\circ}_x(\tilde{I_n} \times \mathbb{R}^d)}$ by orthogonality. So it
follows that
$$\limsup_{n \rightarrow \infty} \|u_n\|_{L^{q_\circ}_tL^{r_\circ}_x(\tilde{I_n} \times \mathbb{R}^d)} \approx \limsup_{l \rightarrow \infty}\limsup_{n \rightarrow \infty}\|(\sum_{j=1}^l |\psi^j_n|^2)^\frac 12\|_{L^{q_\circ}_tL^{r_\circ}_x(\tilde{I_n} \times \mathbb{R}^d)}.$$
Therefore, we get
\begin{align*}
M^2 &< \limsup_{n \rightarrow \infty}
\|\psi^{j_0}_n\|_{L^{q_\circ}_tL^{r_\circ}_x(\tilde{I_n} \times \mathbb{R}^d)}^2
\leq \limsup_{l \rightarrow \infty}\limsup_{n \rightarrow \infty}
\|(\sum_{j=1}^l
 |\psi^j_n|^2)^{\frac 12} \|_{L^{q_\circ}_tL^{r_\circ}_x(\tilde{I_n} \times \mathbb{R}^d)}^2\\
&\lesssim \limsup_{n \rightarrow
\infty}\|u_n\|_{L^{q_\circ}_tL^{r_\circ}_x(\tilde{I_n} \times \mathbb{R}^d)}^2
 \leq
\limsup_{n \rightarrow \infty}\|u_n\|_{L^{q_\circ}_tL^{r_\circ}_x(I_n \times
\mathbb{R}^d)}^2,
\end{align*}
which gives a contradiction by letting $M\to \infty$. This
completes the proof.


\section{Blowup Phenomena }
In this section we provide the proofs of Theorem ~\ref{th:minimal
blow up},~\ref{cpbs}, and Corollary~\ref{mass concentration}

\subsection*{Proof of Theorem~\ref{th:minimal blow up}} By definition
of $\delta_0$, there exist blowup solutions $\{u_n\}_{n=1}^\infty$
with initial data $\{u_{0,n}\}_{n=1}^\infty \subset L^2_x$ such that
$\|u_{0,n}\| \searrow \delta_0$ as $n \rightarrow \infty$. By using
time translation and scaling symmetry, we may assume that
$$\|u_n\|_{L^{q_\circ}_tL^{r_\circ}_x([0,1]\times \mathbb{R}^d)} \rightarrow \infty \text{ as } n \rightarrow \infty.$$

Then we apply Theorem \ref{main} to  $\{u_{0,n}\}$ to get linear profiles $\{\phi^j,h^j_n,s^j_n\}$.
From Proposition \ref{nlpf}, we obtain nonlinear profiles $\{\psi^j\}$ associated with $\{\phi^j,h^j_n,s^j_n\}$.

Since $\limsup_{n \rightarrow \infty}\|u_n\|_{L^{q_\circ}_tL^{r_\circ}_x([0,1]\times \mathbb{R}^d)} =  \infty$, Proposition \ref{nlpf} says that there exists $j_0$ such that $\psi^{j_0}$ blows up and so we have $\|\phi^{j_0}\|_{L^2} \geq \delta_0$. And by Theorem \ref{main}, we have
$$\|\phi^{j_0}\|_{L^2}^2 \leq \sum_{j \geq 1} \|\phi^{j}\|_{L^2}^2 \leq \limsup_{n \rightarrow \infty} \|u_{0,n}\|_{L^2}^2 = \delta_0^2$$ which implies
$$\|\psi^{j_0}(0,\cdot)\|_{L^2} = \|\phi^{j_0}\|_{L^2} \leq \delta_0.$$
Hence, $\|\phi^{j_0}\|_{L^2}$ should be $\delta_0$. For the proof of the second conclusion,
we apply the above argument to the sequence $\{ u(t_n)\}$.

\subsection*{Proof of Theorem~\ref{cpbs}} Let $u_n(t,x) = u(t + t_n,
x)$. Then we have $$\int_{\mathbb{R}^d} |u_n|^2 dx =
\int_{\mathbb{R}^d} |u|^2 dx,$$ and $$\limsup_{n \rightarrow \infty}
\|u_n\|_{L^{q_\circ}_tL^{r_\circ}_x([0, T^* - t_n] \times \mathbb{R}^d)} =
\limsup_{n \rightarrow \infty} \|u_n\|_{L^{q_\circ}_tL^{r_\circ}_x([-t_n, 0] \times
\mathbb{R}^d)} = \infty.$$

Let $\{\phi^j,\psi^j,h^j_n,s^j_n\}$ be family of linear and
nonlinear profiles associated with $\{u_n(0,\cdot)\}$ which are
obtained in Theorem \ref{main} and Proposition \ref{nlpf}. We
take the inverse of symmetry group (or we redefine $s^{j}_n :=
-\frac {s^{j}_n}{(h^{j}_n)^\al}$ and $h^{j}_n := \frac 1{h^{j}_n}$).
Then by Proposition \ref{nlpf} for $I_n = [0, T^* - t_n]$, there
exists $j_0$ such that $$\limsup_{n \rightarrow \infty}
\|\psi^{j_0}\|_{L^{q_\circ}_tL^{r_\circ}_x(I^{j_0}_n \times \mathbb
R^d)} = \infty,$$ where $I^{j_0}_n := [s_n^{j_0}, (T^* -
t_n)/(h_n^{j_0})^\alpha + s^{j_0}_n]$.

Let $s^{j_0} := \limsup_{n \rightarrow \infty} s^{j_0}_n$. From
Lemma \ref{wpasy}, we obtain $s^{j_0} \neq \infty$. Hence $s^{j_0} =
-\infty$, or $s^{j_0} = 0$. If $s^{j_0} = -\infty$, then $\psi^{j_0}$
blows up at $T^{**}$ and $\limsup_{n \rightarrow \infty} (T^* -
t_n)/(h^{j_0}_n)^\alpha \geq T^{**}$. Applying the same
argument to $\tilde{I_n} = [-t_n, 0]$, we get  $\tilde{j_0}$
which satisfies
$$\limsup_{n \rightarrow \infty} \|\psi^{\tilde{j_0}}\|_{L^{q_\circ}_tL^{r_\circ}_x(\tilde{I}^{\tilde{j_0}}_n \times \mathbb R^d)} = \infty,$$
where $\tilde{I}^{\tilde{j_0}}_n := [( - t_n)/(h_n^{\tilde{j_0}})^\alpha + s^{\tilde{j_0}}_n, s_n^{\tilde{j_0}}]$. Since
$\|u(0,x)\|_{L^2} < \sqrt{2} \delta_0$, there cannot be two blowup profiles. Hence $\tilde{j_0}$ should be $j_0$. Therefore, from Lemma \ref{wpasy}, we get $s^{j_0} \neq -\infty$.

Now we have $s^{j_0} = 0$. Then Theorem \ref{main} gives
$$(\Ga^{j_0}_n)^{-1}u_n(0,\cdot) = \phi^{j_0} + \sum_{j \neq j_0}^l (\Ga^{j_0}_n)^{-1}\Ga^j_n\phi^j
+ (\Ga^{j_0}_n)^{-1}\omega^l_n.$$
Due to the orthogonality, $(\Ga^{j_0}_n)^{-1}\Ga^j_n\phi^j
\rightharpoonup 0$ weakly in $L^2$ as $n \rightarrow \infty$. And
since $\limsup_{n \rightarrow \infty} \|\omega^l_n\|_{L^{q_\circ}_tL^{r_\circ}_x}
\rightarrow 0$ as $l \rightarrow \infty$, the uniqueness of weak
limit gives $(\Ga^{j_0}_n)^{-1}\omega^l_n \rightharpoonup 0 \text{
weakly in } L^2.$ Hence we have
$$(\Ga^{j_0}_n)^{-1}u(t_n,\cdot) \rightharpoonup \phi^{j_0}\text{
weakly in } L^2.$$
Therefore, by taking $h_n = h^{j_0}_n$ and $\phi = \phi^{j_0}$, we
see \eqref{weakcon} and \eqref{hhh}. This completes the proof of
Theorem \ref{cpbs}.

\subsection*{Proof of Corollary~\ref{mass concentration}} By Theorem
\ref{cpbs}, there exists $\phi \in L^2_x$ such that $ \|\phi\|_{L^2}
\geq \delta_0,$ and \eqref{weakcon} and \eqref{hhh} hold. Hence we
have  for $ R > 0$,
$$\limsup_{n \rightarrow \infty} (h_n)^{d} \int_{|x| \leq R} |u(t_n,h_n x)|^2 dx \geq \int_{|x| \leq R} |\phi|^2
dx.
$$
After dilation, we get
$$\limsup_{n \rightarrow \infty} \int_{|x| \leq Rh_n} |u(t_n,x)|^2 dx \geq \int_{|x| \leq R} |\phi|^2 dx.$$
Since $\frac {(T^* - t_n)^{1/\alpha}}{\lambda(t_n)} \rightarrow 0$ as $t_n \rightarrow T^*$, we get $\frac {h_n}{\lambda(t_n)} \rightarrow 0$ and
$$\limsup_{n \rightarrow \infty} \int_{|x| \leq \lambda(t_n)} |u(t_n,x)|^2 dx
\geq \int_{|x| \leq R} |\phi|^2 dx .$$ Since $\int |\phi|^2dx \geq
\delta_0^2$, letting $R\to \infty$, we get \eqref{masscon}.

\appendix

\section{}

The local well-posedness of \eqref{eqn} is obtained in \cite{chho}.
The well-posedness for a given asymptotic state is similar and
fairly standard. We provide its proof for completeness.

\begin{lem}\label{wpasy}
Given $g \in L^2(\mathbb{R}^d)$, there exists a positive $T$ and a
unique solution $u$ to \eqref{eqn} such that $u \in
C_tL^2_x([T,\infty) \times \mathbb{R}^d) \cap L^{q_\circ}_tL^{r_\circ}_x([T,\infty)
\times \mathbb{R}^d)$ and
$$\|u(t) - U(t)g\|_{L^2_x} \rightarrow 0 \text { as } t \rightarrow \infty.$$
\end{lem}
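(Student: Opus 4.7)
The plan is to solve the Duhamel equation posed at $t=\infty$, namely
\begin{equation*}
u(t)=U(t)g-i\int_t^\infty U(t-s)F(u(s))\,ds,\qquad F(v)=\lambda(|x|^{-\al}*|v|^2)v,
\end{equation*}
by a contraction mapping argument on the half-line $[T,\infty)$ for $T$ large. The key point distinguishing this from local well-posedness at a finite initial time is that, since $(\qq,\rr)=(3,6d/(3d-2\al))$ is $\al$-admissible and, for $\tfrac{2d}{2d-1}<\al<2$, satisfies the strict range condition $\tfrac{1}{\qq}<\tfrac{2d-1}{2}(\tfrac{1}{2}-\tfrac{1}{\rr})$ of Lemma \ref{str-radial}, the free evolution $U(\cdot)g$ lies globally in $L^{\qq}_tL^{\rr}_x(\mathbb{R}\times\mathbb{R}^d)$. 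Consequently,
\begin{equation*}
\|U(\cdot)g\|_{L^{\qq}_tL^{\rr}_x([T,\infty)\times\mathbb{R}^d)}\longrightarrow 0\quad\text{as }T\to\infty,
\end{equation*}
by dominated convergence. This tail smallness plays the role ordinarily taken by short-time smallness in the local theory.

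With $\eta>0$ small and $T$ chosen large enough that the above tail norm is at most $\eta$, I would set up the fixed point on
\begin{equation*}
X_T=\Big\{u\in C_tL^2_x\cap L^{\qq}_tL^{\rr}_x([T,\infty)\times\mathbb{R}^d):\|u\|_{L^{\qq}_tL^{\rr}_x}\le 2\eta\Big\}.
\end{equation*}
The main nonlinear input is the trilinear Hartree estimate
\begin{equation*}
\|F(u)\|_{L^1_tL^2_x}\lesssim\|u\|_{L^{\qq}_tL^{\rr}_x}^3,
\end{equation*}
which follows from H\"older's inequality and the Hardy--Littlewood--Sobolev inequality; the exponents match exactly because $\tfrac{3}{\rr}-\tfrac{d-\al}{d}=\tfrac{1}{2}$ and $\qq=3$ (these are in fact the identities that force the choice of $(\qq,\rr)$ in the first place). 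Combined with the dual Strichartz estimate, this yields
\begin{equation*}
\Big\|\int_t^\infty U(t-s)F(u(s))\,ds\Big\|_{L^\infty_tL^2_x\cap L^{\qq}_tL^{\rr}_x([T,\infty)\times\mathbb{R}^d)}\lesssim\|u\|_{L^{\qq}_tL^{\rr}_x}^3,
\end{equation*}
together with the analogous difference bound, so for $\eta$ small the map $\Phi(u)(t):=U(t)g-i\int_t^\infty U(t-s)F(u(s))\,ds$ contracts on $X_T$ and produces a unique fixed point $u$.

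The asymptotic condition is then automatic: for any $t\ge T$,
\begin{equation*}
\|u(t)-U(t)g\|_{L^2_x}=\Big\|\int_t^\infty U(t-s)F(u(s))\,ds\Big\|_{L^2_x}\lesssim\|F(u)\|_{L^1_sL^2_x([t,\infty)\times\mathbb{R}^d)},
\end{equation*}
and the right side vanishes as $t\to\infty$ since $F(u)\in L^1_sL^2_x([T,\infty)\times\mathbb{R}^d)$ is already integrable over the whole tail. The construction is essentially a routine adaptation of the Cauchy local well-posedness of \cite{chho}; no substantive new difficulty arises, and the only real point to verify carefully is that $(\qq,\rr)$ sits strictly inside the admissible range of Lemma \ref{str-radial} under the hypothesis $\tfrac{2d}{2d-1}<\al<2$, which is exactly what makes the global Strichartz estimate (and hence the tail vanishing that replaces short-time smallness) available.
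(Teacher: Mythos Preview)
Your argument is correct and follows essentially the same route as the paper's proof: a contraction mapping on $[T,\infty)$ in the Strichartz space $L^{\qq}_tL^{\rr}_x$, with smallness furnished by the vanishing tail of $\|U(\cdot)g\|_{L^{\qq}_tL^{\rr}_x([T,\infty))}$, and the nonlinear estimate coming from Hardy--Littlewood--Sobolev. The only cosmetic differences are that the paper runs the fixed point on $v:=u-U(\cdot)g$ rather than on $u$ directly, and explicitly cites the Christ--Kiselev lemma to justify the inhomogeneous Strichartz estimate for the time-reversed Duhamel integral $\int_t^\infty U(t-s)(\cdots)\,ds$; you may want to mention the latter, since the standard dual Strichartz bound is stated for the forward integral.
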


\begin{proof}
We sketch the proof as the argument is rather standard. We
define nonlinear mapping $\mathcal{N}$ by
$$\mathcal N(v)(t) := i\lambda\int_t^\infty U(t - s)(|x|^{-\alpha}*|U(s)g + v(s)|^2)(U(s)g + v(s))ds$$
for $v$ in Banach space $X = X_{T,\ep}$ given by
\begin{align*}
X := \{ v \in &C_tL^2_x([T,\infty] \times \mathbb{R}^d) \cap L^{q_\circ}_tL^{r_\circ}_x([T,\infty) \times \mathbb{R}^d) : \\& \|v\|_{C_tL^2_x([T,\infty) \times \mathbb{R}^d)} + \|v\|_{L^{q_\circ}_tL^{r_\circ}_x([T,\infty) \times \mathbb{R}^d)} \leq \ep \}.
\end{align*}
Using the Strichartz estimate (Lemma \ref{str-radial}) and Christ-Kiselev lemma, one can get
\begin{align*}
\|\mathcal N(v)\|_{C_tL^2_x([T,\infty)\times\mathbb{R}^d)} &+ \|\mathcal N(v)\|_{L^{q_\circ}_tL^{r_\circ}_x([T,\infty)\times\mathbb{R}^d)} \lesssim \|U(s)g + v(s)\|^{3}_{L^{q_\circ}_tL^{r_\circ}_x([T,\infty)\times\mathbb{R}^d)}\\
&\lesssim (\|U(s)g\|^{3}_{L^{q_\circ}_tL^{r_\circ}_x([T,\infty)\times\mathbb{R}^d)} + \|v(s)\|^{3}_{L^{q_\circ}_tL^{r_\circ}_x([T,\infty)\times\mathbb{R}^d)}).
\end{align*}
Since
$\|U(s)g\|_{L^{q_\circ}_tL^{r_\circ}_x([T,\infty)\times\mathbb{R}^d)}
\lesssim \|g\|_{L^2_x}$by Lemma \ref{str-radial}, $\mathcal N$
becomes a self-mapping on $X$ for sufficiently large $T$. Similarly
one can easily prove that $\mathcal N$ is a contraction mapping
on $X$. Lastly the absolute continuity gives $\|v(t)\|_{L^2_x}
\rightarrow 0$ as $t \rightarrow \infty$.

Now we write $u(t)$ as
$$u(t) = U(t)g + v(t).$$
Then $\|u(t) - U(t)g\|_{L^2_x} \rightarrow 0 \text{ as } t \rightarrow \infty$.
It remains to show that
\begin{align}\label{sol-u} u(\tau) = U(\tau - t)u(t) -i \lambda \int^\tau_t U(\tau - s)((|x|^{-\alpha} * |u|^2)u)(s)ds.\end{align}
In fact, since $v(\tau) = \mathcal N(v)(\tau)$, one can show that
$$v(\tau) = U(\tau-t)v(t)-i\lambda\int_t^\tau U(\tau-s)(|x|^{-\alpha}*|u|^2)u(s)\,ds.$$
Thus
$$
u(\tau) = U(\tau)g + v(\tau) = U(\tau - t)(U(t)g + v(t)) - i\lambda \int_t^\tau U(\tau-s)(|x|^{-\alpha}*|u|^2)u(s)\,ds,
$$
which yields \eqref{sol-u}.
\end{proof}

\section*{Acknowledgments} Y. Cho and G. Hwang are supported by NRF grant 2011-0005122 (Republic of Korea), S. Lee in part by NRF grant 2012-008373 (Republic of Korea).
S. Kwon is partially supported by TJ Park science fellowship and NRF grant 2010-0024017 (Republic of Korea). \medskip

%

\end{document}